\newtheorem{theorem}{Theorem} [section]
\newtheorem{lemma}[theorem]{Lemma}
\newtheorem{proposition}[theorem]{Proposition}
\newtheorem{remark}[theorem]{Remark}
\DeclareMathOperator*{\supp}{supp}
\DeclareMathOperator{\med}{med}
\newcommand{\noi}{\noindent}
\newcommand{\R}{\mathbb{R}}
\newcommand{\T}{\mathbb{T}}
\newcommand{\N}{\mathcal{N}}
\newcommand{\RR}{\mathcal{R}}
\newcommand{\D}{\mathcal{D}}
\newcommand{\HH}{\mathcal{H}}
\newcommand{\al}{\alpha}
\newcommand{\dl}{\delta}
\newcommand{\Dl}{\Delta}
\newcommand{\eps}{\varepsilon}
\newcommand{\g}{\gamma}
\newcommand{\G}{\Gamma}
\newcommand{\ft}{\widehat}
\newcommand{\wt}{\widetilde}
\newcommand{\cj}{\overline}
\newcommand{\dx}{\partial_x}
\newcommand{\dt}{\partial_t}
\newcommand{\dd}{\partial}
\newcommand{\LRA}{\Longrightarrow}
\newcommand{\jb}[1]
{\langle #1 \rangle}
\numberwithin{equation}{section}
\numberwithin{theorem}{section}
\begin{document}

 
\title
[ Normal forms and Upside-down $I$-method]
{\bf A Remark on Normal Forms and \\the ``Upside-down'' $I$-method
for periodic NLS:\\ Growth of Higher Sobolev Norms}

\author{James Colliander, Soonsik Kwon, and Tadahiro Oh}

\address{James Colliander\\
Department of Mathematics\\
University of Toronto\\
40 St. George St, Rm 6290,
Toronto, ON M5S 2E4, Canada}
\thanks{J.C. is supported 
in part by NSERC grant RGP250233-07}

\email{colliand@math.toronto.edu}

\address{
Soonsik Kwon\\
Department of Mathematical Sciences\\
Korea Advanced Institute of Science and Technology\\
335 Gwahangno (373-1 Guseong-dong) \\
Yuseong-gu, Daejeon 305-701, Republic of Korea}
\thanks{S.K. is supported in part by NRF 2010-0024017}

\email{soonsikk@kaist.edu}

\address{Tadahiro Oh\\
Department of Mathematics\\
Princeton University\\
Fine Hall\\ Washington Road\\
Princeton NJ 08544-1000 USA} 

\email{hirooh@math.princeton.edu}

\subjclass[2000]{ 35Q55}

\keywords{Schr\"odinger equation; normal form; upside-down $I$-method; growth of Sobolev norm}

\begin{abstract}
We study growth of higher Sobolev norms of solutions to 
the one-dimensional periodic nonlinear Schr\"odinger equation (NLS).
By a combination of the normal form reduction and the {\it upside-down} $I$-method,
we establish
\[\|u(t)\|_{H^s} \lesssim (1+|t|)^{\al (s-1)+}\]

\noi
with $\al = 1$ for a general power nonlinearity.
In the quintic case, we obtain the above estimate with $\al = 1/2$ via the space-time estimate 
due to Bourgain \cite{BO3, BO4}.
In the cubic case, we concretely compute the terms arising in the first few steps
of the normal form reduction and prove the above estimate with $\al = 4/9$.
These results improve the previously known results (except for the quintic case.)
In Appendix, we also show how Bourgain's idea in \cite{BO3} on the normal form reduction 
for the quintic nonlinearity
can be applied to other powers.
\end{abstract}

\maketitle

\tableofcontents

\section{Introduction}
We consider 
the periodic defocusing nonlinear Schr\"odinger equation (NLS):
\begin{equation} \label{NLS1}
\begin{cases}
i u_t - u_{xx} + |u|^{2p} u = 0\\
u\big|_{t = 0} = u_0 \in H^s (\T),
\end{cases}
\quad (x, t) \in \T\times \R
\end{equation}

\noi
where $\T = \R / 2 \pi \mathbb{Z}$, $p \in \mathbb{N}$, $s > 1$.
NLS \eqref{NLS1} is a Hamiltonian PDE with Hamiltonian:
\begin{equation} \label{Hamil}
H(u) = \frac{1}{2}\int_\T |u_x|^2 + \frac{1}{2p+2} \int_\T |u|^{2p+2}.
\end{equation}

\noi
Indeed, \eqref{NLS1} can be written as
\begin{equation} \label{NLS2}
u_t = i \frac{\dd H} {\dd \bar{u}}.
\end{equation}

\noi
Recall that \eqref{NLS1} also conserves the $L^2$-norm
and the momentum $P(u) = i \int_\T u \cj{u}_x$.
Moreover, the cubic NLS ($p = 1$) is known to be completely integrable \cite{ZS}
in the sense
that it enjoys the Lax pair structure and so infinitely many conservation laws.
For $p \geq 2$, the $L^2$-norm, the momentum,  and the Hamiltonian
are the only known conservation laws.

In \cite{BO1}, Bourgain proved local well-posedness of \eqref{NLS1}
\begin{itemize}
\item in $L^2(\T)$ for the cubic NLS ($p = 1$),
\item in $H^s(\T)$, $s > 0$, for the quintic NLS ($p = 2$),
\item in $H^s(\T)$, $ s >  \frac{1}{2}-\frac{1}{p}$, for $p \geq 3$.
\end{itemize}

\noi
Hence, \eqref{NLS1} is globally well-posed in $H^1(\T)$
for any $p \in \mathbb{N}$,
since the conservation of
the $L^2$-norm and the Hamiltonian
yields an a priori global-in-time bound on the $H^1$-norm of solutions.
However, 
except for the cubic case ($p = 1$),
there is no a priori upperbound on the $H^s$-norm for $s> 1$.

In this paper, we study growth of higher Sobolev norms $\|u(t)\|_{H^s}$, $s > 1$,  of solutions to \eqref{NLS1}.
By iterating the local theory, we easily obtain an exponential bound
\[ \|u(t)\|_{H^s} \leq C_1e^{C_2|t|},\]

\noi
where $C_1$ and $C_2$ depend only on $s$, $p$, and $u_0$.
This exponential bound is not satisfactory at all.
Polynomial bounds were then obtained 
in Bourgain \cite{BO2}, Staffilani \cite{STA}.
The basic idea is to establish an improved iteration bound:
\[ \|u(t+\tau)\|_{H^s} \leq \|u (t) \|_{H^s} +C \|u(t)\|_{H^s}^{1-\dl}\]

\noi
for all $t \in \R$, 
with some $\dl = \dl (s, p) \in (0, 1)$,
where $\tau$ and $C$ depend on $s, p$, and $u_0$.
This in turn implies
\begin{equation} \label{BOUND} 
\|u(t)\|_{H^s} \leq C (1+|t|)^{\frac{1}{\dl}},
\end{equation}

\noi
where $C = C(s, p, u_0)$.
Fourier multiplier method was used in \cite{BO2},  
 and careful multilinear analysis was performed in \cite{STA}.
(The only result in \cite{BO2, STA} for the one-dimensional periodic NLS is 
for the (nonhomogeneous) cubic NLS with $\dl^{-1} = (s-1)+$ in \cite{STA}.)
Then, Sohinger \cite{SO} applied the {\it upside-down} $I$-method (see below)
to study this problem and proved
\eqref{BOUND} with $\dl^{-1} = 2s+$ for $p\geq 2$
and with $\dl^{-1} = \frac{1}{2}s+$ for $p = 1$.\footnote{
Note the presence of $s$ in place of $s-1$ unlike other results.  
See Remark \ref{REM:1}.}

In the appendix of \cite{BO3}, 
Bourgain applied the normal form reduction to the quintic NLS 
and obtained a growth bound;
if $u$ is a global solution to the quintic NLS \eqref{NLS1} with $p = 2$, 
then we have
\begin{equation} \label{bd1}
\|u(t)\|_{H^s} \lesssim_{s, p, u_0}(1+|t|)^{\frac{1}{2}(s-1)+}
\end{equation}

\noi
for $s >1$.\footnote{We use $A \lesssim$ B to denote an estimate of the form $A \leq  CB$ for some $C > 0$.
Similarly, we use $A \sim B$ to denote $A\lesssim B$ and $B \lesssim A$.
In \eqref{bd1}, the expression $\lesssim_{s, p, u_0}$ shows that the implicit constant $C$ depends on $s, p$, and $u_0$.
In the following, we omit such subscripts when there is no confusion.}
His idea can be applied to other powers, which yields
\begin{theorem}\label{thm1}
Fix $ s> 1$. 
Given $u_0 \in H^s(\T)$,
let
 $u$ be the global solution to \eqref{NLS1}
 with initial condition $u_0$.
 
\begin{itemize} 
\item[\textup{(a)}] Let $p = 1, 2$.
Then, the a priori bound \eqref{bd1} holds.

\item[\textup{(b)}] Let $p \geq 3$.
Then, the following a priori bound holds:
\begin{equation} \label{bd2}
\|u(t)\|_{H^s} \lesssim (1+|t|)^{2(s-1)+}.
\end{equation}
\end{itemize}
\end{theorem}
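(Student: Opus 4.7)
The plan is to establish an iteration bound
\[ \|u(t+\tau)\|_{H^s} \leq \|u(t)\|_{H^s} + C \|u(t)\|_{H^s}^{1-\delta}, \]
valid on a time interval of fixed length $\tau = \tau(\|u_0\|_{H^1})$, with $\delta = \tfrac{2}{s-1}-$ in part (a) and $\delta = \tfrac{1}{2(s-1)}-$ in part (b). Iterating over a partition $\{k\tau\}$ of $[0,t]$ and using the implication noted just above \eqref{BOUND} then produces the advertised estimates \eqref{bd1} and \eqref{bd2}.

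To produce the iteration bound I would pass to the interaction representation $v(t)=e^{-it\dd_x^2}u(t)$ and differentiate $\|u(t)\|_{H^s}^2=\|v(t)\|_{H^s}^2$ in time. The linear contribution cancels, leaving a $(2p+2)$-linear expression on the index set $\{n_0=n_1-n_2+\cdots+n_{2p+1}\}$ weighted by $\jb{n_0}^{2s}$ and oscillating as $e^{it\Phi}$, with phase $\Phi=n_0^2-n_1^2+n_2^2-\cdots-n_{2p+1}^2$. The resonant contribution $\{\Phi=0\}$ either cancels after a suitable gauge symmetrisation in $v$ (as in Bourgain's Wick-ordering of the cubic model), or is absorbed by the conserved $L^2$-mass and Hamiltonian; the non-resonant sum is the main enemy.

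On the non-resonant set I would apply the normal form reduction of Bourgain's appendix in \cite{BO3}: integrating by parts in $t$ on $[t,t+\tau]$ transfers the oscillation $e^{it\Phi}$ onto a divisor $(i\Phi)^{-1}$ in the boundary term, and substitutes the equation of motion for $\dt\hat v$ in the interior, producing a $(4p+2)$-linear object with a second resonance phase. Each of these would then be controlled by combining (i) the factorisation of $\Phi$ -- in the cubic case $\Phi=-2(n_0-n_1)(n_0-n_3)$ and its analogue for higher $p$ -- to extract a genuine decay from the divisor; (ii) Bourgain's periodic $L^4$ ($p=1$) and $L^6$ ($p=2$) Strichartz inequalities, which are responsible for the exponent $\tfrac12$ in part (a); and (iii) interpolation between the conserved $H^1$-bound and the current $H^s$-norm to convert the weighted sum into a power of $\|u\|_{H^s}$. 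Taking $\tau$ small relative to $\|u_0\|_{H^1}$ then delivers $\delta = \tfrac{2}{s-1}-$ in case (a).

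The principal obstacle lies in part (b). For $p\geq 3$ the local theory of \cite{BO1} only closes for $s > \tfrac12-\tfrac1p$, so the full $L^{2p+2}$ multilinear estimate is unavailable without derivative loss and several of the factors in the $(4p+2)$-linear interior term must be placed in an $X^{s,b}$-type space rather than in an $L^q_{t,x}$-space. A careful count of how the $2s$-weight on $n_0$ is divided among the divisor gain $|\Phi|^{-1}$, the low-regularity norms of the low-frequency inputs, and an interpolated $H^1$-to-$H^s$ bound on the high-frequency inputs, is then responsible for the weaker $\delta = \tfrac{1}{2(s-1)}-$ and hence \eqref{bd2}. Organising this bookkeeping so that the divisor gain is fully consumed, and ensuring that the surviving resonant boundary terms do not reintroduce derivative losses after symmetrisation, will be the most delicate step of the proof.
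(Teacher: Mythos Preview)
Your overall architecture---differentiate $I_s=\|u\|_{H^s}^2$, perform a normal form reduction on the nonresonant part, then control the remaining multilinear expression by interpolation against the conserved $H^1$-bound---is the same as the paper's (following \cite{BO3}). But there is a genuine gap in your treatment of the resonant set once $p\ge 2$. For $p=1$ the factorisation $\Phi=-2(n_1-n_2)(n_3-n_2)$ indeed forces $D_s(\bar n)=0$ on $\{\Phi=0\}$, so the exactly resonant part vanishes after symmetrisation. For $p\ge 2$ this is false: there are genuinely resonant monomials with $D_s(\bar n)\ne 0$ (for instance, with $p=2$, take $\{n_1,n_3,n_5\}=\{1,5,6\}$ and $\{n_2,n_4,n_6\}=\{2,3,7\}$), and these cannot be removed by gauge/Wick ordering nor ``absorbed'' by mass or energy. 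In the paper's scheme the normal form is run with a threshold $K=T^\delta$, and the \emph{nearly} resonant part $\{|D(\bar n)|<K\}$ is the main surviving contribution, not a remainder. The key ingredient you are missing is Bourgain's multiplier lemma (p.~1355 of \cite{BO3}): under $|D(\bar n)|<K$ one has
\[
|D_s(\bar n)|\ \lesssim\ (n_1^*)^{2(s-1)}\big(n_3^*\,n_4^*+K\big),
\]
which allows the weight $|n|^{2s}$ to be split as $(n_1^*)^{s-1}(n_2^*)^{s-1}\cdot n_3^*n_4^*$ plus a harmless $K$-term. Without this lemma the bookkeeping you describe will not close.

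For part (b) you are also overcomplicating. No $X^{s,b}$ analysis and no periodic $L^{2p+2}$ multilinear estimate is needed when $p\ge 3$. After the normal form has reduced matters to $|D(\bar n)|<K$, the paper estimates the sum by ordinary H\"older and Sobolev embedding on the physical side: place the two top-frequency factors (carrying $|n_j|^{s-1}$) in $L^\infty$ via $H^{\frac12+}$, the third and fourth (carrying $|n_j|$) in $L^2$, and the remaining $2r-4$ factors in $L^\infty$ via $H^{\frac12+}\subset H^1$. This gives $\partial_t I_s\lesssim \|q\|_{H^{s-\frac12+}}^2\|q\|_{H^1}^{2r-2}$, and interpolating $\|q\|_{H^{s-\frac12+}}\le\|q\|_{H^s}^{1-\theta}\|q\|_{H^1}^\theta$ with $\theta=\tfrac{1}{2(s-1)}-$ yields $\partial_t I_s\lesssim T^\delta I_s^{1-\theta}$, hence \eqref{bd2}. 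The weaker exponent in (b) compared with (a) comes entirely from the loss of the space-time bound \eqref{L666}, not from any $X^{s,b}$ derivative counting.
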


\noi
Note that both \eqref{bd1} and \eqref{bd2} provide slightly better estimates 
than those in \cite{SO}.
For the cubic ($p=1$) case, 
there are uniform bounds on Sobolev norms due to the complete integrability.
Our interest in this article is to establish an a priori bound without using such a structure 
in an explicit manner.

\medskip

Consider the Hamiltonian corresponding to \eqref{NLS1} in the frequency space:\footnote{In the following, we often drop constants,
when they do not play an important role.}
\begin{align} \label{Hamil2}
H(q) & = H(q, \bar{q}) = \sum_n n^2 |q_n|^2
+ \sum_{n_1 - n_2 + \cdots - n_{2p+2}=0} q_{n_1} \bar{q}_{n_2} \cdots q_{n_{2p+1}}\bar{q}_{n_{2p+2}}\\
& =: H_0(q) + H_1(q), \notag
\end{align}

\noi
where $q_n= \ft{q}(n)$.
Normal form reduction is a sequence of phase space transformations,
transforming the nonlinear part $H_1(q)$ of the Hamiltonian
into expressions involving only ``nearly-resonant'' monomials
for the form
\begin{equation} \label{MONO}
 q_{n_1} \bar{q}_{n_2} \cdots q_{n_{2r-1}}\bar{q}_{n_{2r}}, \qquad r \geq p +1, 
\end{equation}

\noi
where
\begin{equation}
n_1 - n_2 + \cdots + n_{2r-1} - n_{2r} = 0
\end{equation}

\noi
and
\begin{equation}
|n_1^2 - n_2^2 + \cdots + n_{2r-1}^2 - n_{2r}^2| <K
\end{equation}

\noi
for some large $K>0$,
(plus a non-resonant error, which needs to be estimated in a suitable topology.)
By choosing $K = T^{-\dl}$ for some small $\dl>0$,
Bourgain \cite{BO3} applied the normal form reduction with the $L^6$-Strichartz estimate 
(see \eqref{L6} and \eqref{L666} below)
and established \eqref{bd1} for $|t| \leq T$.\footnote{In \eqref{bd1}, 
the implicit constant is independent of $T$, 
and hence the bound \eqref{bd1} holds for all $t\in \R$.}
In Appendix, we briefly discuss how his idea can be applied to other powers.

\medskip
In order to improve Theorem \ref{thm1},
we combine this normal form reduction with the {\it upside-down} $I$-method.
For $s > 1$, let $\mathcal{D}$ be the Fourier multiplier operator given by
the multiplier $m:\mathbb{Z} \to \R$,
where
\begin{equation} \label{DD1}
m(n) = \begin{cases}
1, & |n| \leq N\\
\big(\frac{|n|}{N}\big)^{s-1}, & |n| > N.
\end{cases}
\end{equation}

\noi
The operator $\D$ is basically a differentiation operator of order $s -1$.
Moreover, it satisfies
\begin{equation} \label{DD}
\|\D q \|_{H^1} \leq \|q\|_{H^s} \leq N^{s-1} \|\D q\|_{H^1}.
\end{equation}

\noi
The upside down $I$-method first appeared in \cite{CKSTT1}
(in the low regularity setting.)
The growth of Sobolev norm is related to 
the low-to-high frequency cascade,
and the (upside-down) $I$ method is a suitable tool to study such a phenomenon.
As a result, we obtain the following improvement.

\begin{theorem}\label{thm2}
Fix $ s> 1$. Given $u_0 \in H^s(\T)$,
let
 $u$ be the global solution to \eqref{NLS1}
 with initial condition $u_0$.

\begin{itemize} 
\item[\textup{(a)}] 
Let $p \geq 3$.
Then, we have
\begin{equation} \label{bd3}
\|u(t)\|_{H^s} \lesssim (1+|t|)^{(s-1)+}.
\end{equation}

\item[\textup{(b)}] 
Let $p =2$. Then,  the a priori bound \eqref{bd1} holds.

\item[\textup{(c)}] 
Let $p =1$. Then, the following a priori bound holds:
\begin{equation} \label{bd4}
\|u(t)\|_{H^s} \lesssim (1+|t|)^{\frac{4}{9}(s-1)+}.
\end{equation}

\end{itemize}

\end{theorem}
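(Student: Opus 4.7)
The proof combines the upside-down $I$-operator with a normal form reduction applied to a modified energy, at a level of precision that depends on $p$. Introduce the modified energy
\[
\mathcal{E}(u)(t) := \tfrac{1}{2}\|\D u(t)\|_{H^1}^2.
\]
In view of \eqref{DD}, it suffices to show that $\mathcal{E}(u)(t) \lesssim \mathcal{E}(u)(0)$ on an interval $|t| \leq N^{\al(p)}$: taking $N \sim T^{1/\al(p)+}$ at time $T$ then yields $\|u(T)\|_{H^s} \lesssim N^{s-1} \lesssim T^{(s-1)/\al(p)+}$. Thus part (a) reduces to $\al(p)=1$, part (b) to $\al(p)=2$, and part (c) to $\al(p)=9/4$.

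Differentiating $\mathcal{E}$ along \eqref{NLS1} yields a multilinear expression
\[
\frac{d}{dt}\mathcal{E}(u)(t) = \sum_{n_1 - n_2 + \cdots - n_{2p+2}=0} \Psi(\vec n)\,\ft u(n_1)\cj{\ft u(n_2)}\cdots \cj{\ft u(n_{2p+2})}\, e^{-it\Phi(\vec n)},
\]
where $\Phi(\vec n) = n_1^2-n_2^2+\cdots-n_{2p+2}^2$ and $\Psi$ is a symbol encoding the weights from $\D$. Following Bourgain \cite{BO3}, I decompose this into the near-resonant region $|\Phi|<K$ and the non-resonant region $|\Phi|\geq K$; on the latter I integrate by parts in $t$, absorbing $e^{-it\Phi}$ at the cost of a factor $\Phi^{-1}$. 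This step is equivalent to correcting $\mathcal{E}$ by a multilinear term, and differentiating this correction generates higher-order multilinear expressions (of order $2p+4, 2p+6,\ldots$), each of which is treated by the same procedure if still non-resonant. The output of this scheme is an increment estimate of the schematic form
\[
|\mathcal{E}(u)(t+1) - \mathcal{E}(u)(t)| \lesssim K^{-1} N^{-\beta_1}\mathcal{E}(u)(t)^{r_1} + N^{-\beta_2}\mathcal{E}(u)(t)^{r_2}.
\]

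The two contributions are bounded by the Strichartz-type estimates: for $p\geq 3$, the standard $L^{2p+2}$-Strichartz estimate together with a trivial counting of near-resonant tuples is enough, and balancing $K = K(N)$ yields $\al(p)=1$. For $p=2$, substituting Bourgain's refined $L^6_{x,t}$ space-time estimate from \cite{BO3, BO4} improves the non-resonant bound and produces $\al=2$. For $p=1$, where a single normal form iteration is essentially trivial because the only solutions of $n_1-n_2+n_3-n_4=0$ with $n_1^2-n_2^2+n_3^2-n_4^2=0$ are the pair-collision resonances, I must carry the reduction several steps further, explicitly computing the first few correction terms, estimating the genuinely near-resonant quadruples via divisor-type counting bounds, and optimizing the resulting two-parameter bound in $K$ and $N$; this is what ultimately gives $\al=9/4$.

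The main obstacle is bookkeeping for the normal form. Each iteration produces a proliferation of correction terms of ever higher multilinearity, and one must check that every such term is bounded purely in terms of $\|\D u\|_{H^1}$, rather than the uncontrolled $\|u\|_{H^s}$, and with enough smallness to close the iteration. The most delicate case is (c), where the near-total resonance of the cubic nonlinearity means that the improvement over Bourgain's quintic bound $\al=1/2$ comes from a careful quantitative tradeoff in the resonance counting at a higher order of the normal form hierarchy.
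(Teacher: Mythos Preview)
Your outline takes a genuinely different route from the paper. You work with the quadratic modified energy $\tfrac12\|\mathcal{D}u\|_{H^1}^2$ and correct it by integration by parts in time (the ``differentiation by parts'' scheme). The paper instead performs a symplectic normal form reduction on the Hamiltonian $H$ itself: a finite sequence of Lie transforms $\Gamma$ produces a new Hamiltonian $\mathcal{H}=H\circ\Gamma$, and the almost-conserved quantity is the \emph{full} $\mathcal{H}(\mathcal{D}q)$, including the nonlinear potential. The crucial cancellation driving the paper's estimates---that \eqref{HH1}$+$\eqref{HH2} and \eqref{HH3} vanish identically when all frequencies are below $N$---is a consequence of this Hamiltonian structure and is not visible in your setup. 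Your scheme is closer to Sohinger's approach (see Remark~\ref{REM:1}); it may in principle reach the same exponents, but the mechanism is different and you have not supplied it.

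Two of your technical claims are off and would not close as stated. For $p\geq 3$ there is no useful $L^{2p+2}$ Strichartz estimate on $\mathbb{T}$; the paper obtains $\alpha=1$ purely from Sobolev embedding together with the normal-form gain and the $H^1$ a~priori bound, and this absence of a Strichartz input is precisely why (a) is weaker than (b). For $p=1$, the improvement to $\alpha=9/4$ does not come from divisor-type counting of resonant quadruples. It comes from computing the first three Lie transforms explicitly (eliminating the full non-resonant quartic part, then the non-resonant sextic parts of $\{\mathcal{R}_2,F_1\}$ and $\{\mathcal{N},F_1\}$), introducing a \emph{second} resonance threshold $N^\beta$ in addition to the basic $N^\delta$, bounding the surviving near-resonant sextic terms via Bourgain's space-time estimate \eqref{L666}, and finally optimizing over $\beta$ (the value $\beta=\tfrac14$ gives $N^{-9/4+}$). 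Your proposal names the right landmarks but not the actual machinery that produces the exponents.
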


\begin{remark}\label{REM:1} \rm
In \cite{SO}, Sohinger defined $\D$ to be a differentiation of order $s$
and proved an estimate on $\|\D u(t)\|_{L^2}$,
i.e. his argument is based on almost conservation of the $L^2$-norm.
However, it seems that by using $\D$ as in \eqref{DD1}
with almost conservation of the Hamiltonian ($\sim H^1$-norm), 
one can obtain the results in \cite{SO}, but with $s-1$ in place of $s$.
\end{remark}

Our argument is closely related to that by Bourgain in \cite{BO4},
where he combined the normal form reduction and the $I$-method
to study global well-posedness of the defocusing quintic NLS on $\T$.
There are two main steps in the proof of Theorem \ref{thm2}.
First, we apply the normal form reduction to the Hamiltonian $H$ in \eqref{Hamil2}
and obtain a new Hamiltonian $\HH = H\circ \G$
with a certain symplectic transformation $\G$
so that the transformed Hamiltonian $\HH$ is 
of the form \[\HH(q) = H_0(q) + \N(q),\]

\noi
where $\N$ consists of nearly-resonant terms (plus ``small'' error.)
Our choice of the symplectic transformation $\G$
satisfies
$\| \G q\|_{L^2} = \|q\|_{L^2}$
and $\| \G q\|_{H^1} \sim \|q\|_{H^1}$.
Recall from \cite{BO3} that $K = T^{-\dl}$ for Theorem \ref{thm1}.
For Theorem \ref{thm2}, we choose $K = N^\dl$ for some small $\dl > 0$,
and then choose $N$ in terms of $T$ as in the usual (upside-down) $I$-method.

After performing the normal form reduction, 
we apply the upside-down $I$-method
to the transformed Hamiltonian $\HH$.
Suppose that $q(t)$ satisfies the Hamiltonian flow of $\mathcal{H}$, i.e.
\[ q_t = i \frac{\dd \mathcal{H}}{\dd q}.\]

\noi
Then, differentiating in time as in \cite{BO4}, we obtain 
\begin{align}
\frac{d}{dt} \HH(\D q) & = \frac{\dd \HH}{\dd q}(\D q)\cdot \D q_t 
+ \frac{\dd \HH}{\dd \bar{q}}(\D q)\cdot \cj{\D q}_t  \notag \\
& = i \sum_n m(n)^2 n^2 \bigg( \bar{q}_n \frac{\dd \N}{\dd \bar{q}_n}(q)
 - q_n \frac{\dd \N}{\dd q_n}(q) \bigg)  \label{HH1} \\
& + i \sum_n m(n) n^2 \bigg( q_n \frac{\dd \N}{\dd q_n}(\D q)
- \bar{q}_n \frac{\dd \N}{\dd \bar{q}_n}(\D q) \bigg)  \label{HH2}\\
& + i \sum_n m(n) \bigg(  \frac{\dd \N}{\dd q_n}(\D q) \frac{\dd \N}{\dd \bar{q}_n}( q)
- \frac{\dd \N}{\dd q_n}( q) \frac{\dd \N}{\dd \bar{q}_n}( \D q) \bigg).  \label{HH3}
\end{align}

\noi
As noted in \cite{BO4}, we have
$\eqref{HH1} + \eqref{HH2} = 0$ and
$\eqref{HH3} = 0$ if $\supp q \subset [-N, N]$.
Hence, we assume that 
\begin{equation} \label{MAX}
\max (|n_1|, \dots, |n_{2r}|) > N
\end{equation}

\noi
for the monomials of the form \eqref{MONO}.
Then, we prove Theorem \ref{thm2} (a)
by estimating the contributions from \eqref{HH1}--\eqref{HH3}.
When $p \leq 2$, we obtain an improvement from the space-time estimate
by Bourgain \cite{BO3, BO4}.
See \eqref{L666} below.
Finally, for the cubic nonlinearity ($p = 1$),
we concretely compute the terms arising in the first few steps
of the normal form reduction,
and show that these terms (as well as the higher order terms) satisfy better estimates.
A further improvement may be achieved by computing more terms in the normal form reduction.
However, the actual computation becomes very cumbersome and we do not pursue this direction
any further in this article.
See Gr\'ebert-Kappeler-P\"oschel \cite{GKP}
for the normal form theory of the defocusing cubic NLS,
based on the integrability of the equation.

%

For the non-periodic cubic NLS, 
Sohinger \cite{SO2} used the a priori bound on the $H^{k}$-norm, $k \in \mathbb{N}$,
and obtained
\[\|u(t)\|_{H^s} \lesssim (1+|t|)^{\{s\}+},\]

\noi
where $\{s\}$ denotes the fractional part of $s >1$.
Note that  such uniform bounds on the $H^k$-norms are results of
integrability of the equation. See \cite{FT, ZM}.
One could try to prove a similar result in the periodic case.
However, we do not pursue this direction in this article, 
since our focus is to present an analytical method without using the complete integrability 
in an explicit manner.

\medskip

This paper is organized as follows.
In Section 2, we briefly review the theory of the normal form reduction,
and apply it in the NLS context.
In Section 3, we apply the upside-down $I$-method
to the transformed Hamiltonian
and 
prove Theorem \ref{thm2} (a) and (b).
In Section 4, we focus on the cubic NLS.
By explicitly computing the first few steps of the normal form reduction,
we establish improved estimates   
in applying the upside-down $I$-method,
and prove Theorem \ref{thm2} (c).
In Appendix, we discuss Theorem \ref{thm1}
and show how to apply Bourgain's idea \cite{BO3}
for general powers.

\section{Normal Form Reduction} \label{SEC:NORMAL}
\subsection{Introduction}

The normal form reduction involves in eliminating non-resonant parts
of the Hamiltonians by introducing suitable symplectic transformations.
Our goal is to repeat this procedure
so that the transformed Hamiltonian $\HH$ consists of 
the quadratic part $H_0$, the resonant part $\N_0$, and the error $\N_r$.
In the following,  we briefly review the basic procedure of the normal form reduction.
Also, see Kuksin-P\"oschel \cite{KP}, Bourgain \cite{BO3, BO4}, Gr\'ebert \cite{GRE}.

Let 
\begin{equation}\label{Hamil3}
\wt{H}(q, \bar{q})
=  \sum_{n_1 - n_2 + \cdots - n_{2r}=0} c(\bar{n})q_{n_1} \bar{q}_{n_2} \cdots q_{n_{2r-1}}\bar{q}_{n_{2r}}
\end{equation}

\noi
be (a part of) a Hamiltonian obtained at some stage of this process.
Assume that $c(\bar{n}) := c(n_1, \cdots, n_{2r}) \in \R$
and that $c(\bar{n})$ is invariant\footnote{This is satisfied by the initial Hamiltonian \eqref{Hamil2},
and thus is automatically satisfied by all the Hamiltonians appearing in the process.} 
(modulo $\pm$ signs)
under the permutation $n_{2k-1} \leftrightarrow n_{2k}$,
$k = 1, \dots, r$.
Divide $\wt{H}$ into the resonant\footnote{Strictly speaking, $\wt{H}_0$ is only ``nearly resonant''.
However, we refer to it as the ``resonant'' part for simplicity.} part $\wt{H}_0$ and non-resonant part $\wt{H}_1$,
i.e. $\wt{H}_0$ (and $\wt{H}_1$) is the restriction of $\wt{H}$
on $|D(\bar{n})| \leq K$ (and  $|D(\bar{n})| >K$, respectively), 
where $D(\bar{n})$ is defined by 
\begin{equation} \label{RES}
D(\bar{n}) := n_1^2 - n_2^2 + \cdots + n_{2r-1}^2 - n_{2r}^2.
\end{equation}

\noi
We now introduce a symplectic transformation $\G = \G_F$, called the Lie transform,
to eliminate $\wt{H}_1$.
Define a Hamiltonian $F$  ($= \text{``}D^{-1}\wt{H}_1 \text{''}$) by
\begin{equation} \label{Hamil4}
F(q, \bar{q}) = \sum_{\substack{n_1 - n_2 + \cdots - n_{2r}=0\\ |D(\bar{n})|>K}} \frac{c(\bar{n})}{D(\bar{n})}
q_{n_1} \bar{q}_{n_2} \cdots q_{n_{2r-1}}\bar{q}_{n_{2r}}.
\end{equation}

\noi
Then, it is not difficult to see that $F$ satisfies the following homological equation:
\begin{equation} \label{HOMOLOGY}
\{ H_0, F\} = -\wt{H}_1,
\end{equation}

\noi
where $H_0(q) = \sum_n n^2 |q_n|^2$ as in \eqref{Hamil2}
and the Poisson bracket $\{ \cdot, \cdot\}$ is defined by
\begin{equation} \label{Poisson}
\{ H_1, H_2\} = i \sum_n \bigg[ \frac{\dd H_1}{\dd q_n}\frac{\dd H_2}{\dd \bar{q}_n}
- \frac{\dd H_1}{\dd \bar{q}_n}\frac{\dd H_2}{\dd q_n}\bigg].
\end{equation}

\noi
Consider a Hamiltonian flow associated to the Hamiltonian $F$:
\begin{equation} \label{LIE}
 q_t = i \frac{\partial F}{\partial \bar{q}}.
\end{equation}

\noi
Let $\G_t$ denote the flow map generated by $F$ at time $t$.
Then, we define the Lie transform $\G (= \G_F)$ to be the time-1 map $\G_1$.\footnote{Here, we simply assume that 
the flow exists up to time $t = 1$. See Subsection \ref{SUBSEC:2.3}.}
As we see below, the non-resonant part $\wt{H}_1$
is eliminated under $\G$.

Recall the following lemma. See \cite[Lemma 2.8]{GRE}.
\begin{lemma} \label{LEM:LIE}
Let $\G_t$ be as above.
Then, for a smooth function $G$, 
 we have
\[\frac{d}{dt} (G\circ \G_t) = \{ G, F\} \circ \G_t.\]
\end{lemma}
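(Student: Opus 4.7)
The plan is to prove Lemma \ref{LEM:LIE} directly from the chain rule, combined with the definition of the Poisson bracket in \eqref{Poisson} and the Hamiltonian equations of motion \eqref{LIE} that define the flow $\G_t$. Since the statement is a standard fact about Lie transforms, no serious analytic input is required; the content is essentially computational, modulo justifying the manipulations in the infinite-dimensional Fourier setting.

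To carry this out, I would first observe that, because $F$ is real-valued as a Hamiltonian, complex conjugation of \eqref{LIE} yields the companion equation $\bar q_t = -i\,\dd F/\dd q$. Writing $\G_t(q,\bar q) = (q(t), \bar q(t))$ and differentiating $t \mapsto G(q(t),\bar q(t))$ by the chain rule gives
\[\frac{d}{dt}(G\circ \G_t) = \sum_n \bigg[ \frac{\dd G}{\dd q_n}(\G_t)\cdot \frac{d q_n}{dt} + \frac{\dd G}{\dd \bar q_n}(\G_t)\cdot \frac{d\bar q_n}{dt}\bigg].\]
Substituting the two Hamiltonian equations for $F$ and factoring out the common $i$, the right-hand side becomes
\[i\sum_n \bigg[\frac{\dd G}{\dd q_n}\frac{\dd F}{\dd \bar q_n} - \frac{\dd G}{\dd \bar q_n}\frac{\dd F}{\dd q_n}\bigg]\circ \G_t,\]
which is exactly $\{G,F\}\circ \G_t$ by the definition in \eqref{Poisson}.

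The only delicate point worth examining carefully, and hence the ``main obstacle'' in a fully rigorous write-up, is the legitimacy of the termwise chain rule when the sum over $n \in \Z$ is infinite. For the polynomial Hamiltonians $F$ and $G$ arising in the normal form procedure, the partial derivatives $\dd F/\dd q_n$ and $\dd F/\dd \bar q_n$ are themselves finite-degree polynomials in $(q_m, \bar q_m)$, so once $q(t)$ is known to remain in a Sobolev space where the underlying multilinear sums converge absolutely, both the chain rule and the interchange of summation with differentiation reduce to routine summability checks. The existence of the flow $\G_t$ up to $t=1$, which is a prerequisite for all of the above, is not treated in this lemma and is instead addressed separately in Subsection \ref{SUBSEC:2.3}; given that existence, the identity claimed here is purely algebraic.
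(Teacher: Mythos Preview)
Your proposal is correct and follows essentially the same approach as the paper: both apply the chain rule to $G(q(t),\bar q(t))$, use the Hamiltonian equation $q_t = i\,\dd F/\dd \bar q$ together with its conjugate $\bar q_t = -i\,\dd F/\dd q$ (which the paper phrases as $\overline{\dd F/\dd \bar q} = \dd F/\dd q$), and recognize the result as the Poisson bracket \eqref{Poisson}. Your additional remarks on justifying the termwise chain rule in the infinite-dimensional setting go slightly beyond what the paper writes but are appropriate.
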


\begin{proof}
By Chain Rule, we have
\begin{align*}
\frac{d}{dt} (G\circ \G_t) 
& =  \frac{\dd G}{\dd q}(q(t)) \cdot q_t + \frac{\dd G}{\dd \bar{q}}(q(t)) \cdot \bar{q}_t \\
& = i \frac{\dd G}{\dd q} \cdot \frac{\partial F}{\partial \bar{q}}
- i \frac{\dd G}{\dd \bar{q}} \cdot \frac{\partial F}{\partial q}
= \{G(q(t)), F(q(t))\}
\end{align*}

\noi
since $\cj{ \partial F / \partial \bar{q}} = \partial F / \partial q$.
\end{proof}

\noi
By the Taylor series expansion of $G\circ \G_t$ centered at $t = 0$, 
we obtain
\begin{equation} \label{TAYLOR}
G\circ \G = \sum_{k = 0}^\infty \frac{1}{k!} \{G, F\}^{(k)},
\end{equation}

\noi
where $\{G, F\}^{(k)}$ denotes the $k$-fold Poisson bracket of $G$ with $F$,
i.e.
\[\{G, F\}^{(k)} := \{ \cdots \{ G, 
\underbrace{ F\}, F\}, \cdots, F\} }_{k \text{ times}} \]

\noi
and $\{G, F\}^{(0)} = G$.

Suppose that we start with a Hamiltonian $H = H_0 + \wt{H}$,
where $H_0$ is as in \eqref{Hamil2} and $\wt{H}$ is as in \eqref{Hamil3}.
From \eqref{TAYLOR} and \eqref{HOMOLOGY}, the transformed Hamiltonian $H' = H\circ \G$ is given by
\begin{align*}
H' & = H\circ \G = H_0 \circ \G + \wt{H}_0 \circ \G + \wt{H}_1 \circ \G \\
& = H_0 + \wt{H}_0 + \wt{H}_1
+ \{ H_0, F\} +  \{ \wt{H}_0, F\} + \{ \wt{H}_1, F\} + \text{h.o.t.}\\
& = H_0 + \wt{H}_0 
 +  \{ \wt{H}_0, F\} + \{ \wt{H}_1, F\} + \text{h.o.t}.,
\end{align*}

\noi
where ``h.o.t.''~stands for higher order terms.
Hence, we have eliminated the non-resonant part $\wt{H}_1$ by 
the Lie transform $\G$.
Then, we define the resonant part $\wt{H}'_0$
and the non-resonant part $\wt{H}'_1$ of the new Hamiltonian $H'$
by 
\begin{align*}
\wt{H}'_0 & := \wt{H}_0 + \text{resonant part of } \{ \wt{H}_0, F\} + \{ \wt{H}_1, F\} + \text{h.o.t.}\\
\wt{H}'_1 & := \text{non-resonant part of }\{ \wt{H}_0, F\} + \{ \wt{H}_1, F\} + \text{h.o.t}.
\end{align*}

\noi
Note that at each step, the lowest degree among the monomials in the non-resonant part
increases at least by two since $\text{deg} \,F \geq 4$.

Lastly, we discuss the regularity of the Lie transform $\G$.
It follows from Sobolev embedding that $\G$ acts boundedly
on bounded subsets of $H^s(\T)$, $s >\frac{1}{2}$.
See \cite{KP}.
Indeed, for $F$ as in \eqref{Hamil4}, 
by H\"older inequality and Sobolev embedding, we have
\begin{align*}
\bigg\|\frac{\partial F}{\partial \bar{q}}\bigg\|_{H^s}
& \lesssim \sup_{\|p\|_{L^2} = 1 }
K^{-1} \sum_{n_1 - n_2 + \cdots + n_{2r-1} - n = 0} 
|c(\bar{n})|  |q_{n_1}| \cdots |q_{n_{2r-1}}| \cdot \jb{n}^s |p_{n}|\\
& \lesssim \sup_{\|p\|_{L^2} = 1 } \|p\|_{L^2} \|q\|_{H^s} \|q\|_{H^{\frac{1}{2}+}}^{2r-2}
\leq \|q\|_{H^{s}}^{2r-1},
\end{align*}

\noi
where we used the fact that $\jb{n} \lesssim \max (\jb{n_1}, \dots, \jb{n_{2r-1}})$
in the second inequality.
This is sufficient for our purpose 
since we take the phase space 
to be $H^1(\T)$ for $\D q$
(and $H^s(\T)$, $s >1$,
for our initial data $q$.)
See \cite{BO3, BO4} for the boundedness of $\G$ in $H^\eps (\T)$, $\eps > 0$,
for the quintic case.

\subsection{Normal form reduction}

In this subsection, we actually implement the normal form reduction
to the Hamiltonian $H$ in \eqref{Hamil2} corresponding to NLS \eqref{NLS1}.
Our goal is the following; 
by a finite\footnote{We repeat the process only finitely many times.
In particular, the degree $2r$ of monomials  is finite.} sequence of Lie transforms, we transform $H$ 
into a Hamiltonian of the form
\begin{equation} \label{Hamil5}
\HH (q) = H_0 (q) + \N_0 (q) + \N_r (q) ,
\end{equation}

\noi
where $H_0$ is the quadratic part, 
$\N_0$ is the resonant part $\N_0$, and $\N_r$  is ``small'' error.
We assume that $q = \{q_n\}_{n\in \mathbb{Z}}$
satisfies the following $L^2$- and $H^1$-bounds:
\begin{align}
\label{L2}
& \|q\|_{L^2} \leq C_1,\\
\label{H1}
& \|q\|_{H^1} \leq C_2.
\end{align}

\noi
In Section \ref{SEC:3},
we use the result of this section with
 $\D q \in H^1$ (for given $q \in H^s$, $s>1$)
as the phase space element
in place of $q$ in \eqref{L2} and \eqref{H1}. 

First, we need to define the ``norm'' $\|\, \cdot\, \|$
to measure a size of a (homogeneous) Hamiltonian.
Given a homogeneous multilinear expression
\begin{equation}\label{Hamil6}
\N(q, \bar{q})
=  \sum_{n_1 - n_2 + \cdots - n_{2r}=0} c(\bar{n})q_{n_1} \bar{q}_{n_2} \cdots 
q_{n_{2r-1}}\bar{q}_{n_{2r}},
\end{equation}

\noi
define the ``size'' $\|\N\|$ of $\N$ by 
\begin{equation} \label{NORM}
\|\N\| = \sup_* \sum_n |c(\bar{n})| |q^{(1)}_{n_1}| |q^{(2)}_{n_2}| \cdots |q^{(2r)}_{n_{2r}}|
\end{equation}

\noi
where the supremum is taken over 
factors $q^{(j)}$, $1\leq j \leq 2r$
such that 
\begin{itemize}
\item all factors satisfy \eqref{L2}

\item all except at most two factors also satisfy \eqref{H1}.
\end{itemize}

\noi
i.e. the supremum is taken over all the factors, allowing two {\it exceptional} ones.
See \cite{BO4} for a similar definition of a norm
on homogeneous multilinear expressions.
Like (3.6) in \cite{BO4}, we obtain the following proposition
on closure of the Poisson bracket under this norm.

\begin{proposition} \label{PROP:POIS1}
Let $H_1$ and $H_2$ be Hamiltonians of the form \eqref{Hamil6}.
Then, we have
\begin{equation}
\|\{H_1, H_2\}\|\lesssim \|H_1\|\|H_2\|.
\end{equation}

\end{proposition}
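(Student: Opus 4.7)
The plan is to expand the Poisson bracket $\{H_1,H_2\}$ via \eqref{Poisson}, observe that each resulting monomial factors into an $H_1$-piece and an $H_2$-piece coupled only by the single frequency $n$ being summed out, and then estimate the pairing by a Cauchy--Schwarz in $n$, using duality to realize each resulting factor as an admissible contribution to the norm $\|H_j\|$.

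Concretely, writing $H_1,H_2$ in the multilinear form \eqref{Hamil6} and noting that $\partial_{q_n}H_1$ and $\partial_{\bar q_n}H_2$ merely remove the factor at frequency~$n$ from one of their slots, one sees that $\{H_1,H_2\}$ is a finite sum (over the choices of removed slots $2k-1, 2l$ and the two terms in the Poisson bracket) of expressions of the form $\sum_n A_nB_n$, where, for any admissible choice of $2(r+s-1)$ test factors for $\|\{H_1,H_2\}\|$ with at most two exceptional factors distributed as $e_1+e_2\le 2$ between the two sides,
\[
A_n=\sum_{\substack{n_1-\cdots-n_{2r}=0\\ n_{2k-1}=n}}|c_1(\bar n)|\prod_{j\ne 2k-1}|p^{(1,j)}_{n_j}|,\qquad B_n=\sum_{\substack{m_1-\cdots-m_{2s}=0\\ m_{2l}=n}}|c_2(\bar m)|\prod_{j\ne 2l}|p^{(2,j)}_{m_j}|.
\]
First I would apply the plain Cauchy--Schwarz $\sum_n A_nB_n\le \|A\|_{\ell^2}\|B\|_{\ell^2}$ and dualize: for instance $\|A\|_{\ell^2}\sim C_1^{-1}\sup_{\|f\|_{L^2}\le C_1}|\sum_n A_nf_n|$, and inserting such an $f$ into the empty slot $2k-1$ produces a summand admissible for $\|H_1\|$ as soon as $f$ may be counted as one of the two allowed exceptional factors. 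Since $f$ satisfies only the $L^2$-bound, this costs one exception slot and the procedure succeeds when $e_1\le 1$; by symmetry the same works for $\|B\|_{\ell^2}$ when $e_2\le 1$, which handles all distributions $(e_1,e_2)\in\{(0,0),(0,1),(1,0),(1,1)\}$.

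The hard part will be the asymmetric cases $(e_1,e_2)=(2,0)$ and $(0,2)$, symmetric under swapping $H_1\leftrightarrow H_2$; consider $(2,0)$. Here the $H_1$-side has already exhausted its exceptional budget, so the Cauchy--Schwarz test function there must be non-exceptional. I would replace the plain pairing by the weighted one
\[
\sum_n A_nB_n=\sum_n \frac{A_n}{\jb{n}}\cdot \jb{n}\,B_n \le \Bigl\|\tfrac{A_{\cdot}}{\jb{\cdot}}\Bigr\|_{\ell^2}\,\|B\|_{H^1},
\]
whose first factor is controlled by $\|H_1\|$: dualizing against $\|f\|_{\ell^2}=1$ and setting $g_n:=f_n/\jb{n}$ gives $\|g\|_{H^1}=1$, so $g$ is a \emph{non-exceptional} test factor at slot $2k-1$ and the exceptional count on the $H_1$-side stays at $e_1=2$. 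Bounding the second factor $\|B\|_{H^1}$ by $\|H_2\|$ is the main obstacle: dualizing and using the zero-momentum constraint $m_1-\cdots-m_{2s}=0$ to estimate $\jb{m_{2l}}\lesssim \sum_{j\ne 2l}\jb{m_j}$, the weight $\jb{m_{2l}}$ can be redistributed onto some slot $j^\star\ne 2l$, replacing $p^{(2,j^\star)}$ by the ``differentiated'' factor $\jb{\cdot}\,p^{(2,j^\star)}$ whose $L^2$-norm equals $\|p^{(2,j^\star)}\|_{H^1}\le C_2$; this new factor together with the Cauchy--Schwarz test sequence account for the two allowed exceptions on the (previously exception-free) $H_2$-side, yielding $\|B\|_{H^1}\lesssim \|H_2\|$ and completing the proof.
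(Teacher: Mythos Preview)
Your proof is correct and follows essentially the same approach as the paper's. The paper packages the duality bounds you derive for $\|B\|_{H^1}$ and $\|B\|_{\ell^2}$ into a separate lemma (Lemma~\ref{LEM:POIS2}), and then organizes the case split as ``both exceptional factors in $H_1$'', ``both in $H_2$'', or ``one in each'', which matches your $(2,0)$, $(0,2)$, and $(1,1)$ cases; the weighted Cauchy--Schwarz with $\jb{n}^{\pm 1}$ and the redistribution of $\jb{m_{2l}}$ onto a largest slot via the zero-momentum constraint are exactly the paper's moves.
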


We need the following lemma to prove Proposition \ref{PROP:POIS1}.

\begin{lemma} \label{LEM:POIS2}
Let $H$  be a Hamiltonian of the form \eqref{Hamil6}.

\noi
\textup{(a)} If all the factors of $\dd H/\dd \bar{q}$ satisfy both \eqref{L2} and \eqref{H1}, 
then we have
\begin{align}
\label{H11} & \bigg\| \frac{\dd H}{\dd \bar{q}}\bigg\|_{H^1} \leq \|H\|.
\end{align}

\noi
\textup{(b)}
If all the factors of $\dd H/\dd \bar{q}$ satisfy \eqref{L2}
and all except at most one satisfy \eqref{H1}, 
then we have
\begin{align}
\label{L22} & \bigg\| \frac{\dd H}{\dd \bar{q}}\bigg\|_{L^2} \leq \|H\|.
\end{align}
\end{lemma}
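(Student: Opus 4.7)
The strategy is to represent both the $L^2$ and $H^1$ norms of $\dd H/\dd\bar q$ via duality with $\ell^2$ test sequences, and then to recognize the resulting $2r$-linear expression as an admissible configuration in the definition of $\|H\|$ in \eqref{NORM}. The test sequence restores the factor count from $2r-1$ (in $\dd H/\dd\bar q$) back to $2r$ (as in $H$), playing the role of the ``missing'' factor.

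I would first write $\dd H/\dd\bar q_n$ as a sum over the $r$ positions $n_{2k}=n$ at which $\bar q_n$ may have been differentiated, leaving a $(2r-1)$-linear expression with coefficient $c(\bar n)$. By the pair-swap invariance of $c(\bar n)$, the $r$ contributions are structurally identical in absolute value, so it suffices to bound one representative and absorb $r$ into the implicit constant.

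For part (b), I would pair $\dd H/\dd\bar q$ against $p\in\ell^2$ with $\|p\|_{\ell^2}=1$, so that $p$ may be viewed as a new $2r$-th factor. The factor $p$ satisfies \eqref{L2}; by hypothesis, the remaining $2r-1$ factors all satisfy \eqref{L2} and all but at most one also satisfy \eqref{H1}. Hence at most two of the $2r$ factors fail \eqref{H1}, which is exactly the budget allowed in the supremum defining $\|H\|$, and the expression is controlled by $\|H\|$. For part (a), I would dualize against $\langle n\rangle\bar p_n$ with $\|p\|_{\ell^2}=1$; using the frequency constraint together with the elementary bound $\langle n\rangle\lesssim \sum_{j=1}^{2r-1}\langle n_j\rangle$, the derivative weight is redistributed onto one of the factors, producing
\[
\bigg\|\frac{\dd H}{\dd\bar q}\bigg\|_{H^1}
 \lesssim \sum_{j=1}^{2r-1}\sup_{\|p\|_{\ell^2}=1}
 \bigg|\sum c(\bar n)\,\bar p_n\bigl(\langle n_j\rangle q_{n_j}\bigr)\!\!\prod_{k\neq j} q_{n_k}\bigg|.
\]
For each $j$, the test sequence $p$ is in $L^2$, the differentiated factor $\langle n_j\rangle q_{n_j}$ has $\ell^2$ norm $\|q\|_{H^1}$ and so is in $L^2$ but not necessarily in $H^1$, and the remaining $2r-2$ factors lie in $H^1$ by the hypothesis of (a). Exactly two factors are $L^2$-only, again fitting \eqref{NORM}, so each term is controlled by $\|H\|$, and summing over the bounded index $j$ absorbs into the implicit constant.

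The only subtle point is the bookkeeping of the two exceptional $L^2$-only slots permitted by \eqref{NORM}: in (b) one slot accommodates $p$ and the other a possibly non-$H^1$ input factor, while in (a) one slot accommodates $p$ and the other is consumed by the redistributed derivative. Beyond this combinatorial matching, the argument is just elementary duality together with the subadditivity of $\langle n\rangle$ under the zero-sum frequency constraint; no genuine analytic obstruction appears.
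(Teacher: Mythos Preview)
Your proposal is correct and follows essentially the same duality argument as the paper: introduce a test sequence $p\in\ell^2$ to restore the $2r$-th slot and check that the resulting configuration has at most two factors lacking the $H^1$ bound, which is exactly what the definition of $\|H\|$ in \eqref{NORM} permits. The only cosmetic difference is that the paper handles part (a) by writing $|n|\lesssim |n_1|$ (after reducing WLOG to the case where $|n_1|$ is the largest input frequency) and placing the derivative on that single factor, whereas you use the subadditivity $\langle n\rangle\lesssim\sum_j\langle n_j\rangle$ and sum over $j$; both devices are equivalent up to a harmless factor of $2r-1$.
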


\begin{proof}
(a) Without loss of generality, assume $|n| \lesssim |n_1|$ since 
$n = n_1 - n_2 + \cdots + n_{2r-1}$.
By duality, we have
\begin{align*}
\text{LHS of } \eqref{H11}
& \lesssim 
\sup_{\|p\|_{L^2} = 1}
\sum_{n_1 - \cdots +n_{2r-1}- n=0} |c(\bar{n})| \big(|n_1| |q_{n_1}| \big)
|\bar{q}_{n_2}| \cdots |q_{n_{2r-1}}| |\bar{p}_{n}|\\
& \leq \|H\|,
\end{align*}

\noi
since $\big\||n_1| q_{n_1}\big\|_{l^2}$, $\|\bar{p}_{n}\|_{l^2} \lesssim 1$,
i.e. all the factors satisfy \eqref{L2}
and all, except for $|n_1| q_{n_1}$ and $\bar{p}_{n}$, satisfy \eqref{H1}.
Part (b) follows in a similar manner.
\end{proof}

\begin{proof}[Proof of Proposition \ref{PROP:POIS1}]
It suffices to prove
\begin{align} \label{H111}
\bigg\| \sum_n \frac{\dd H_1}{\dd q_n} \frac{\dd H_2}{\dd \bar{q}_n} \bigg\|
\lesssim \|H_1\| \|H_2\|.
\end{align}

\noi
There are three cases, depending on the location of
the two {\it exceptional} factors.

First, suppose that both appear in $\dd H_1/\dd q_n$.
By duality, we have
\begin{align*}
\bigg\| \frac{1}{\jb{n}} \frac{\dd H_1}{\dd q_n} \bigg\|_{l^2_n}
\leq \sup_{\|p\|_{L^2} = 1} \sum_{n_1 - \cdots  +n_{2r-1}- n=0} |c_1(\bar{n})| |q_{n_1}| 
|\bar{q}_{n_2}| \cdots |q_{n_{2r-1}}| \big(\jb{n}^{-1}|\bar{p}_{n}|\big)
\leq \|H_1\|,
\end{align*}

\noi
(where $\jb{n} := 1+|n|$), 
since $\jb{n}^{-1}\bar{p}_{n}$ with $\|p\|_{L^2} = 1$ satisfies both \eqref{L2} and \eqref{H1}.
Hence, from Lemma \ref{LEM:POIS2} (a), we have
\begin{align*}
\bigg|\sum_n \frac{\dd H_1}{\dd q_n}\frac{\dd H_2}{\dd \bar{q}_n}\bigg|
\leq \bigg\| \frac{1}{\jb{n}} \frac{\dd H_1}{\dd q_n} \bigg\|_{l^2_n}  \bigg\|\frac{\dd H_2}{\dd \bar{q}}\bigg\|_{H^1}
\leq \|H_1\|\|H_2\|.
\end{align*}

\noi
The same argument holds when both exceptional factors appear in $\dd H_2/\dd \bar{q}_n$.
Finally, suppose
that exactly one exceptional factor appears
in each of $\dd H_1/\dd q_n$ and $\dd H_2/\dd \bar{q}_n$.
Then, \eqref{H111} follows from Cauchy-Schwarz inequality
and Lemma \ref{LEM:POIS2} (b). 
\end{proof}

\medskip
Now, we inductively iterate the steps of the normal form reduction,
assuming \eqref{L2} and \eqref{H1}. 
For fixed $N$ (to be chosen in terms of $T$ in the next section),
we set $K = N^{\dl}$ for some small $\dl > 0$.
(Recall that we have $K = T^{\dl}$ in \cite{BO3}.)
{\it Assume} that at some stage of the process, 
the Hamiltonian is of the form
\begin{equation} \label{Hamil7}
\HH (q) = \sum_n n^2 |q_n|^2 + \N_0 (q) + \N_1(q) + \N_r(q) ,
\end{equation}

\noi
where 
the monomials in the resonant part $\N_0$
satisfy
\begin{equation} \label{N1}
|D(\bar{n}) | \leq N^\dl
\end{equation}

\noi
for some small $\dl > 0$, (where $D(\bar{n})$ is as in \eqref{RES}),
the monomials in the non-resonant part $\N_1$
satisfy
\begin{equation} \label{N2}
|D(\bar{n}) | > N^\dl,
\end{equation}

\noi
and 
the remainder part $\N_r$ satisfies
\begin{align} \label{N3}
\|\N_r\| < N^{-C}
\end{align}

\noi
for some large $C>0$.
Moreover, we have
\begin{equation} \label{N4}
\|\N_0\|, \, \|\N_1\| \lesssim 1. 
\end{equation}

Clearly, the initial Hamiltonian in \eqref{Hamil2}
satisfies the above conditions.
Note that there is no remainder part at this stage,
i.e. $\N_r = 0$.
By Sobolev embedding along with \eqref{L2} and \eqref{H1},
we have
\begin{align*}
|H_1(q)| 
& = \bigg|\sum_{n_1 - n_2 + \cdots - n_{2p+2}=0} q_{n_1} \bar{q}_{n_2} \cdots q_{n_{2p+1}}\bar{q}_{n_{2p+2}}\bigg|\\
& \leq \| q\|_{L^2}^2 \|q\|_{L^\infty}^{2p}
\leq \| q\|_{L^2}^2 \|q\|_{H^{\frac{1}{2}+}}^{2p}
\lesssim 1.
\end{align*}

\noi
i.e. $\|H_1\| \lesssim 1$.
Hence, the resonant and non-resonant parts of $H_1$ satisfy \eqref{N4}.

\medskip
Assume \eqref{Hamil7}.
Suppose that (the collection of monomials of the lowest degree in) 
the non-resonant part $\N_1$ is given by
\[
\sum_{\substack{n_1 - n_2 + \cdots - n_{2r}=0\\ |D(\bar{n})|>N^\dl}} c(\bar{n})
q_{n_1} \bar{q}_{n_2} \cdots q_{n_{2r-1}}\bar{q}_{n_{2r}}.
\]

\noi
As in the previous subsection,
define $F$ by
\begin{equation} \label{Hamil8}
F(q, \bar{q}) = \sum_{\substack{n_1 - n_2 + \cdots - n_{2r}=0\\ |D(\bar{n})|>N^\dl}} \frac{c(\bar{n})}{D(\bar{n})}
q_{n_1} \bar{q}_{n_2} \cdots q_{n_{2r-1}}\bar{q}_{n_{2r}}
\end{equation} 

\noi
so that we have
\begin{equation} \label{HOMOLOGY2}
 \{F, H_0\} = \N_1.
\end{equation}

\noi
Let $\G$ be the Lie transform associated to $F$.
Then, by \eqref{TAYLOR},  we have
\begin{align*}
\HH'  = \HH \circ \G 
 & = H_0 +  \N_0 + \N_1  \\
& \hphantom{Xl} + \{ H_0, F\} 
+ \{ \N_0, F\} + \{ \N_1, F\} +  \text{h.o.t.}\\
& \hphantom{Xl}+ \N_r \circ \G\\
& = H_0 +  \N_0   
 + \{ \N_0, F\} + \{ \N_1, F\} +  \text{h.o.t.}
 + \N_r \circ \G.
\end{align*}

From \eqref{N2} and \eqref{N4}, we have
\begin{equation}\label{F1}
\|F\| \leq N^{-\dl} \|\N_1\| \lesssim N^{-\dl}.
\end{equation}

\noi
From Proposition \ref{PROP:POIS1}, \eqref{N3}, \eqref{F1}, and \eqref{TAYLOR}, 
we see that $\N_r \circ \G$ satisfies \eqref{N3}.
It also follows that
the higher order terms with sufficiently high degrees satisfy \eqref{N3}.

Let $\mathfrak{N}$ denote 
the sum of $\{ \N_0, F\}$, $\{ \N_1, F\}$, and the remaining part of the higher order terms,
i.e.
\[\mathfrak{N} = 
\sum_{k = 1}^M \frac{1}{k!} \{\N_0, \, F\}^{(k)}
+ \sum_{k = 1}^M \frac{1}{k!} \{\N_1, \, F\}^{(k)}
+ \sum_{k = 2}^M \frac{1}{k!} \{H_0, \, F\}^{(k)}\]

\noi
for some $M \in \mathbb{N}$.
From Proposition \ref{PROP:POIS1}, \eqref{Hamil8}, and \eqref{N4}, we have
\[\|\{ \N_0, F\}\| \lesssim N^{-\dl} \|\N_0\| \|\N_1\|
\lesssim N^{-\dl} \|\N_1\|.
\]

\noi
Similarly, we have
$\|\{ \N_0, F\}^{(k)}\|\lesssim N^{-k\dl} \|\N_1\|$.
Then, 
from \eqref{F1} and \eqref{HOMOLOGY2}, we have
\begin{align*}
\|\mathfrak{N}\| \lesssim 
 \|\N_1\|\bigg\{    \sum_{k = 1}^M \frac{1}{k!} N^{-k\dl}
+ \sum_{k = 2}^M \frac{1}{k!} N^{-(k-1)\dl} \bigg\}
\lesssim N^{-\dl}\|\N_1\|. 
\end{align*}

\noi
Now, according to \eqref{N1} and \eqref{N2}, 
divide $\mathfrak{N}$ into its resonant part $\mathfrak{N}_0$
and its non-resonant part $\mathfrak{N}_1$.
Hence, we can write the new Hamiltonian $\HH'$ as
\[ \HH' = H_0 + \N_0' + \N_1' + \N_r'\]

\noi
where $\N_0' := \N_0 + \mathfrak{N}_0$
satisfies \eqref{N4},
$\N_1' := \mathfrak{N}_1$ satisfies 
\begin{equation} \label{F2}
\|\N_1'\| \lesssim N^{-\dl} \|\N_1\|,
\end{equation}

\noi
and $\N_r'$ satisfies \eqref{N3}.
In view of \eqref{F2}, we can hide the non-resonant part
into the remainder part,
by iterating the process sufficiently many times.

Therefore, by a {\it finite} sequence of Lie transforms, 
we have obtained  a new Hamiltonian $\HH$ of the form
\begin{equation} \label{Hamil9}
\HH (q) = \sum_n n^2 |q_n|^2 + \N_0 (q) + \N_r (q) ,
\end{equation}

\noi
where $\|\N_0\| \lesssim 1$
and $\|\N_r\|\lesssim N^{-C}$.

\subsection{$L^2$- and $H^1$-bounds under the Lie transform} \label{SUBSEC:2.3}

Before proceeding with the upside-down $I$-method, 
let us discuss how the conditions \eqref{L2} and \eqref{H1}
are affected under the Lie transform.

Given $F$ as in \eqref{Hamil8},
we define the Lie transform $\G$
to be the time-1 map of \eqref{LIE}.
Denoting by $\G_t$ the flow map of \eqref{LIE} at time $t$,
we have
\begin{equation} \label{LIE2}
\G_t q = q(t) = q(0) +i \int_0^t \frac{\dd F}{ \dd \bar{q}}(q(t')) dt',
\end{equation}

\noi
where $q = q(0)$ and $\G_t q = q(t)$.
Let $M(q) = \|q\|_{L^2}^2 = \sum_n |q_n|^2$.
Then, by Lemma \ref{LEM:LIE}, we have
\[ \frac{d}{dt}M(q(t)) = \{ M(q(t)), F(q(t))\} =0.\]

\noi
Hence, we have $\|\G_t q\|_{L^2} = \|q\|_{L^2}$.
In particular, we obtain 
\begin{equation} \label{L222}
\|\G q\|_{L^2} = \|q\|_{L^2}.
\end{equation}

Now, take the $H^1$-norm in \eqref{LIE2}. 
From Lemma \ref{LEM:POIS2} (a), \eqref{NORM}, and \eqref{N2}, 
we have
\begin{align*}
\|\G_t q\|_{H^1}
& \leq \|q\|_{H^1} 
+ t \sup_{t' \in [0, t]} \bigg\|\frac{\dd F}{ \dd \bar{q}}(q(t')) \bigg\|_{H^1}
\leq \|q\|_{H^1} + C t \sup_{t' \in [0, t]} \|F(q(t'))\|\\
& \leq \|q\|_{H^1} + C' t N^{-\dl} \sup_{t' \in [0, t]} \|q(t')\|_{L^2}^2 \|q(t')\|_{L^\infty}^{2r-2}.
\end{align*}

\noi
By taking the supremum over $t \in [0, 1]$
and by Sobolev embedding
along with interpolation on the $L^2$- and $H^1$-norms, we have
\begin{align*}
\sup_{t \in [0, 1]} \|\G_t q\|_{H^1}
& \leq \|q\|_{H^1} + C' N^{-\dl} \sup_{t \in [0, 1]} \|\G_t q\|_{H^1}^{r-1+}
\end{align*}

\noi
Now, choose $N = N(\g) $ large enough
such that if $X_t \leq 4\g $, then
\begin{equation} \label{F3}
X_t \leq \g + C'' N^{-\dl} X_t^{r-1+} \quad \text{ implies } \quad
X_t \leq 2\g.
\end{equation}

\noi
For our purpose, let $\g = \|q\|_{H^1}$.
By the local theory of \eqref{LIE}, there exists a time $[0, \eps_0]$
such that $X_t := \|\G_t q\|_{H^1} \leq 2\g$ for $t \in [0, \eps_0]$.
In particular, we have $\|\G_{\eps_0} q\|_{H^1} \leq 2\g$.
By the local theory again, 
there exists $\eps > 0$ such that $X_t := \|\G_t q\|_{H^1} \leq 4\g$ for $t \in [0,  \eps_0 + \eps]$.
By \eqref{F3}, we have $\|\G_t q\|_{H^1} \leq 4\g$
for  $t \in [0, \eps_0 + \eps]$.
By iterating the argument with a fixed size of $\eps$,
we obtain
$\|\G_t q\|_{H^1} \leq  2 \|q\|_{H^1}$ for  $t \in [0, 1]$.
By inverting the time, we obtain
\begin{equation} \label{H123}
\|\G q\|_{H^1} \sim \|q\|_{H^1} \sim \|\G^{-1} q\|_{H^1}.
\end{equation}

From \eqref{L222} and \eqref{H123},
we see that the conditions \eqref{L2} and \eqref{H1} are preserved under the Lie transform.

\section{Upside-down $I$-method} \label{SEC:3}
In this section, we estimate the terms \eqref{HH1}, \eqref{HH2}, and \eqref{HH3}
appearing in $\frac{d}{dt} \HH(\D q)$,
where $\HH$ is the Hamiltonian of the form \eqref{Hamil9} obtained in the previous section.
The analysis is very similar to that in \cite{BO4}.
We estimate $|\frac{d}{dt} \HH(\D q)|$
in terms of a negative power of $N$ and then prove Theorem \ref{thm2}.

\subsection{Estimates on \eqref{HH1}, \eqref{HH2}, and \eqref{HH3}} \label{SEC:NONLIN1}
In the following, we assume that $\N$ is of the form \eqref{Hamil6}.
Then, we can rewrite \eqref{HH1} and \eqref{HH2} as follows.
\begin{align} 
\eqref{HH1}
& = - \sum_{n_1 - n_2+ \cdots - n_{2r} = 0}
c(\bar{n}) R(\bar{n})
q_{n_1}\bar{q}_{n_2}\cdots\bar{q}_{n_{2r}} \label{HH4}
\\
\eqref{HH2}
& = \sum_{n_1 - n_2+ \cdots - n_{2r} = 0}
c(\bar{n}) D(\bar{n})
\D q_{n_1} \cj{\D q}_{n_2}\cdots\cj{\D q}_{n_{2r}}, \label{HH5}
\end{align}

\noi
where $D(\bar{n})$ is as in \eqref{RES} and
$R(\bar{n})$ is defined by 
\begin{equation} \label{R1}
R(\bar{n}) =  m(n_1)^2n_1^2 - m(n_2)^2n_2^2 + \cdots - m(n_{2r})^2 n_{2r}^2.
\end{equation}

Recall that we assume \eqref{MAX}:
\[n^*_1 := \max (|n_1|, \dots, |n_{2r}|) > N.\]

\noi
We use $n^*_j$ to denote the $j$-th largest frequency in the absolute value.
Then, we have $n^*_2 \gtrsim N$ since $n_1 -n_2 + \cdots - n_{2r} = 0$.

In the following, we assume that $\D q$ satisfies both \eqref{L2} and \eqref{H1}.
Let $\mathbb{P}_{\geq N}$ be the Dirichlet projection onto the frequencies $\{|n| \geq N\}$.
Then, we have
\begin{align} \label{SL2}
\| \mathbb{P}_{\geq N} \D q \|_{L^2} \lesssim N^{-1} \|\D q\|_{H^1} \lesssim N^{-1}
\end{align}

\medskip
\noi
$\bullet$ {\bf Estimate on \eqref{HH3}:}
Let $\N$ and $\wt{\N}$ be of the form \eqref{Hamil6}
with frequencies $\{ n_j \}_{j = 1}^{2r}$ and $\{ \wt{n}_j \}_{j = 1}^{2\wt{r}}$.
In the following, we estimate
\begin{equation} \label{HHH3}
\sum_n m(n) \bigg(  \frac{\dd \N}{\dd q_n}(\D q) \frac{\dd \wt{\N}}{\dd \bar{q}_n}( q)
- \frac{\dd \N}{\dd q_n}( q) \frac{\dd \wt{\N}}{\dd \bar{q}_n}( \D q) \bigg),
\end{equation}

\noi
where
\begin{align} \label{FREQ}
n = n_2 - n_3 + \cdots + n_{2r}
= \wt{n}_1 -  \wt{n}_2 + \cdots + \wt{n}_{2\wt{r}-1}.
\end{align}

\noi
If $\max(n_1^*, \wt{n}_1^*) \leq  N$, then we have $\eqref{HHH3} = 0$.\footnote{Here, we 
abuse notation and set 
$n_1^* = \max( |n_2|, |n_3|, \dots, |n_{2r}|)$
and $\wt{n}_1^* = \max( |\wt{n}_1|, |\wt{n}_2|, \dots, |\wt{n}_{2\wt{r}-1}|)$.}
Hence, without loss of generality, assume $n_1^* > N$.
We consider only the first term in \eqref{HHH3}
since the second term can be estimated in a similar manner.
Now, we consider two cases: (a) $|n| \gtrsim N$, \ (b) $|n| \ll N$.

\medskip
\noi
$\circ$ Case (a): Suppose $|n| \gtrsim N$.
This implies $\wt{n}_1^* \gtrsim |n|\gtrsim N$.
By Cauchy-Schwarz inequality, we have
\begin{align} \label{A1}
\bigg|\sum_n m(n)   \frac{\dd \N}{\dd q_n}(\D q) \frac{\dd \wt{\N}}{\dd \bar{q}_n}( q)\bigg|
\leq \bigg\| \frac{\dd \N}{\dd q_n}(\D q) \bigg\|_{l^2_n}
 \bigg\|m(n) \frac{\dd \wt{\N}}{\dd \bar{q}_n}( q)\bigg\|_{l^2_n}.
\end{align}

First, let us consider the first factor.
By \eqref{SL2},
we have
\[ \| N m(n_1^*) q_{n_1^*} \|_{l^2_n} \lesssim 1.\]

\noi
By duality, we have
\begin{align} \label{A2}
\bigg\| \frac{\dd \N}{\dd q_n}(\D q) \bigg\|_{l^2_n}
= \sup_{\|p\|_{L^2} = 1} 
\sum_{n - n_2 + \cdots - n_{2r}= 0} c(\bar{n})\cdot p_n \cdot \cj{\D q}_{n_2} \cdots \cj{\D q}_{n_{2r}}
\lesssim N^{-1} \|\N\|,
\end{align}

\noi
where $p_n$ and $N \D q_{n_1^*}$ are the exceptional factors.

Next, consider the second factor in \eqref{A1}.
By the monotonicity of $m(\cdot)$ and \eqref{SL2},
we have
\[  \| N m(n) q_{\wt{n}_1^*} \|_{l^2_{\wt{n}_1^*}} 
\lesssim  \| N m(\wt{n}_1^*) q_{\wt{n}_1^*} \|_{l^2_{\wt{n}_1^*}} \lesssim 1.\]

\noi
By duality, we have
\begin{align} \label{A3}
\bigg\|m(n) \frac{\dd \wt{\N}}{\dd \bar{q}_n}( q)\bigg\|_{l^2_n}
& = \sup_{\|p\|_{L^2} = 1} 
\sum_{\wt{n}_1 - \wt{n}_2 + \cdots + \wt{n}_{2\wt{r}-1} -n = 0} 
c(\bar{n}) \cdot p_n \cdot m(n) q_{\wt{n}_1} \cj{q}_{\wt{n}_2} \cdots q_{\wt{n}_{2\wt{r}-1}} \notag\\
& \lesssim N^{-1} \|\wt{\N}\|,
\end{align}

\noi
where $p_n$ and $N m(n)q_{\wt{n}_1^*}$ are the exceptional factors.

\medskip
\noi
$\circ$ Case (b): Suppose $|n| \ll N$.
From \eqref{FREQ}, we have $n_2^* \gtrsim N$.
Thus, we have
\[ \| N m(n_1^*) q_{n_1^*} \|_{l^2}, \| N m(n_2^*) q_{n_2^*} \|_{l^2} \lesssim 1.\]

\noi
By Cauchy-Schwarz inequality, we have
\begin{align} \label{A5}
\bigg|\sum_n m(n)   \frac{\dd \N}{\dd q_n}(\D q) \frac{\dd \wt{\N}}{\dd \bar{q}_n}( q)\bigg|
\leq \bigg\| \frac{1}{\jb{n}}\frac{\dd \N}{\dd q_n}(\D q) \bigg\|_{l^2_n}
 \bigg\| \jb{n}m(n) \frac{\dd \wt{\N}}{\dd \bar{q}_n}( q)\bigg\|_{l^2_n}.
\end{align}

\noi
By duality, we have
\begin{align} \label{A6}
\bigg\| \frac{1}{\jb{n}} \frac{\dd \N}{\dd q_n}(\D q) \bigg\|_{l^2_n}
& = \sup_{\|p\|_{L^2} = 1} 
\sum_{n - n_2 + \cdots - n_{2r}= 0} c(\bar{n})\cdot \jb{n}^{-1} p_n \cdot \cj{\D q}_{n_2} \cdots \cj{\D q}_{n_{2r}} \notag \\
& \lesssim N^{-2} \|\N\|,
\end{align}

\noi
where $N \D q_{n_1^*}$ and $N \D q_{n_2^*}$ are the exceptional factors.
By Lemma \ref{LEM:POIS2} (a) and the monotonicity of $m(\cdot)$, 
the second factor in \eqref{A5} is bounded by $\|\wt{\N}\|$.

\medskip

From \eqref{A1}--\eqref{A6}, we obtain
\begin{equation} 
|\eqref{HH3}| \lesssim N^{-2} \|\N\|^2. \label{A4}
\end{equation}

\noi
Lastly, by writing $\N = \N_0 + \N_r$ and $\wt{\N} = \wt{\N}_0 + \wt{\N}_r$,
if $\N_r$ or $\wt{\N}_r$ appears in either the first or the second factor, then we have 
$|\eqref{HH3}| \lesssim N^{-C}$.

\medskip
\noi
$\bullet$ {\bf Estimate on \eqref{HH4}:}
Let $\eta(n^2) = m(n)^2 n^2$.
i.e. we have
\[ \eta(u) = \begin{cases}
u, & u \leq N^2 \\
N^{2(1-s)}u^s, & u \geq N^2.
\end{cases} \]

\noi
In particular, we have $\eta'(u) \lesssim \eta(u) /u$.

\medskip
\noi
$\circ$ $\N_0$-contribution:
Assume $n_1^* = |n_1|$.
Then, without loss of generality, we can assume
$n_2^* \sim |n_2|$
since $|D(\bar{n})| \leq N^\dl < (n^*_1)^\dl$
for small $\dl > 0$.
Thus, we have
\begin{align*}
n_1^2 = n_2^2 + O((n_3^*)^2 + N^\dl).
\end{align*}

\noi
By Mean Value Theorem, we have
\begin{align*}
|\eta(n_1^2) - \eta(n_2^2)| \lesssim m(n_1)^2 O((n_3^*)^2 + N^\dl).
\end{align*}

\noi
Thus, we have
\begin{align*}
|R(\bar{n})| & \lesssim m(n_1)^2 O((n_3^*)^2 + N^\dl) + m(n^*_3)^2 (n_3^*)^2\\
& \lesssim m(n_1)^2 O((n_3^*)^2 + N^\dl),
\end{align*}

\noi
where 
$R(\bar{n})$ is defined in \eqref{R1}.
Now, we consider two cases, depending on the size of  $n_3^*$:
(a) $n_3^* \lesssim N^\frac{\dl}{2}$,
(b) $n_3^* \gg  N^\frac{\dl}{2}$.

\medskip

\noi
$\circ$ Case (a): Suppose $n_3^* \lesssim  N^\frac{\dl}{2}$.
In this case, we have
$|R(\bar{n})| 
 \lesssim m(n_1)m(n_2)  N^{\dl}.$
Hence, from \eqref{SL2}, we have
\begin{align}
|\eqref{HH4}| & \lesssim N^{\dl}
\sum_{n_1 - n_2+ \cdots - n_{2r} = 0}
|c(\bar{n})|\cdot m(n_1)q_{n_1}\cdot m(n_2)\bar{q}_{n_2} \cdot q_{n_3}\cdots\bar{q}_{n_{2r}} \notag \\
& \lesssim N^{-2+\dl} \|\N_0\|. \label{BB1}
\end{align}

\noi
$\circ$ Case (b): Suppose $n_3^* \gg N^\frac{\dl}{2}$.
In this case, we have $n_4^*\sim n_3^*$ since $|D(\bar{n})| \ll (n_3^*)^2$.
Otherwise, if $n_4^*\ll n_3^*$, then
we would have
\begin{align} \label{B1}
(1+ o(1))(n_3^*)^2 = |(n_1 - n_2) (n_1 + n_2)|
= 
(1+ o(1))n_3^* (n_1^*+ n_2^*)
\end{align}

\noi
since $n_1$ and $n_2$ have the same sign in view of 
$|n_1 - n_2| = (1+ o(1))n_3^*.$
Then, it follows from \eqref{B1} that $n^*_3 = 0$,
which in turn implies
$n_1 = n_2$ and $n^*_3 = \cdots = n^*_{2r} = 0$.
In this case, we have $\eqref{HH4} = 0$ since $R(\bar{n}) = 0$.

Thus, we have
$|R(\bar{n})| 
 \lesssim m(n_1)m(n_2)  n_3^*n_4^*.$
By H\"older inequality and Sobolev embedding on the physical side, 
we have
\begin{align}
|\eqref{HH4}| & \lesssim 
\sum_{n_1 - n_2+ \cdots - n_{2r} = 0}
|c(\bar{n})|\cdot  m(n_1)q_{n_1}\cdot m(n_2)\bar{q}_{n_2} \cdot 
n_3^*\, n_4^*\, q_{n_3}\cdots\bar{q}_{n_{2r}} \notag \\
& \lesssim \|\mathbb{P}_{\gtrsim N} \D q\|_{H^{\frac{1}{2}+}}^2 \big\||\dx| \, q\big\|_{L^2}^2 
\|q\|_{H^{\frac{1}{2}+}}^{2r-4}  \notag \\
& \lesssim N^{-1+} \|\D q \|_{H^1}^{2r} \label{BB2}
\end{align}

\noi
since $\|\mathbb{P}_{\gtrsim N} \D q\|_{H^{\frac{1}{2}+}}
\lesssim N^{-\frac{1}{2}+}\|\D q\|_{H^1}$.

\medskip
From \eqref{BB1} and \eqref{BB2}, 
we obtain 
\begin{equation} 
| \eqref{HH4}| \lesssim N^{-1+}. \label{B3}
\end{equation}

\medskip
\noi
$\circ$ $\N_r$-contribution:
In this case, we use $|R(\bar{n})| \lesssim m(n_1^*)^2 (n_1^*)^2$.
Then, proceeding with $\| \D q \|_{H^1} \lesssim 1$, we have
\begin{equation} 
| \eqref{HH4}| \lesssim  \|\N_r\| <  N^{-C}. \label{B6}
\end{equation}

\medskip
\noi
$\bullet$ {\bf Estimate on \eqref{HH5}:}

\medskip
\noi
$\circ$ $\N_0$-contribution:
By proceeding with $|D(\bar{n}) | \leq N^\dl$ and \eqref{SL2} as before,
we obtain
\begin{equation} 
|\eqref{HH5}| \lesssim N^{-2+\dl}\|\N_0\|.  \label{C1}
\end{equation}

\medskip
\noi
$\circ$ $\N_r$-contribution:
In this case, we have $|D(\bar{n}) | \lesssim (n_1^*)^2$.
Hence, we obtain \eqref{B6}.

\subsection{Proof of Theorem \ref{thm2} (a)} \label{SEC:II}
Now, we are ready to put all the estimates together and prove Theorem \ref{thm2} (a).
Given $u_0 \in H^s$ with $s>1$, let 
\[ \D q_0 = \G^{-1} \D u_0.\]

\noi
Then, 
from \eqref{DD} and \eqref{H123}, we have
\[ \|\D q_0\|_{H^1}  \sim \| \D u_0\|_{H^1}
\leq \|u_0\|_{H^s} \sim_{s, u_0} 1.\]

From \eqref{A4}, \eqref{B3}, and \eqref{C1}, we have
\begin{equation} \label{D1}
\bigg| \frac{d}{dt} \HH(\D q) (t) \bigg| \lesssim N^{-1+}
\end{equation}

\noi
assuming 
\begin{equation}\label{D2}
 \|\D q (t) \|_{H^1} \lesssim 1.
\end{equation}

Now, fix $T>0$.
Suppose that \eqref{D2} holds true for $|t| \leq T$.
Then, from \eqref{D1}, we have 
\begin{equation} \label{D3}
\|\D q(t) \|_{H^1}^2 \sim 
\HH(\D q(t)) \leq  \HH(\D q(0)) + C T  N^{-1+}, \qquad |t| \leq T. 
\end{equation}

\noi
By choosing $N\sim T^{1+}$, 
we indeed have 
\begin{equation} \label{DD3}
\|\D q(t) \|_{H^1} \lesssim 1, \qquad |t| \leq T.
\end{equation}

Note that we performed the upside-down $I$-method on the transformed coordinates.
By  \eqref{DD}, \eqref{H123}, and \eqref{DD3}, we obtain
\[ \|u(t)\|_{H^s} \lesssim  N^{s-1} 
\|\D u(t) \|_{H^1}
\sim  N^{s-1} 
\|\D q(t) \|_{H^1}
\lesssim T^{(s-1)+}, \qquad |t| \leq T.\]

\noi
Therefore, we conclude that 
\[ \|u(t)\|_{H^s} \lesssim (1+|t|)^{(s-1)+}.\]

\noi
This completes the proof of Theorem \ref{thm2} (a).

\subsection{Improvement for $p \leq 2$: Theorem \ref{thm2} (b)}
\label{SEC:p23}

In this subsection, we briefly discuss 
how to improve the result when $p = 1, 2$.
The basic idea is to use the estimate due to Bourgain.
In  \cite{BO3, BO4}, Bourgain studied the quintic NLS,
where he used space-time estimates to obtain purely spatial estimates.
For this purpose, the $L^6$-Strichartz estimate \cite{BO1}:
\begin{equation} \label{L6}
\|e^{-i t \Dl} \phi\|_{L^6(\T^2)}
\lesssim C_N \|\phi\|_{L^2},
\quad
\supp \ft{\phi} \subset [-N, N]
\end{equation}

\noi 
plays a crucial role, where $C_N = \exp \big(C \frac{\log N}{\log \log N}\big) \ll N^{0+}$.
Then, one inductively proves estimates for Hamiltonians with higher order nonlinearity,
which appear in the process of the normal form reduction.
In the end, one obtains \cite[(5.13)]{BO4}:\footnote{One can indeed obtain this estimate
with $(n_3^*)^{0+}$ in place of $(n_1^*)^{0+}$, but it is not useful for our purpose.}
\begin{equation} \label{L666}
\max_{a \in \mathbb{Z}} \bigg| 
\sum_{\substack{n_1 - n_2 + \cdots - n_{2r} = 0\\ D(\bar{n})  = a}}
|c(\bar{n})| |q^{(1)}_{n_1}||q^{(2)}_{n_2}|\cdots |q^{(2r)}_{n_{2r}}| \bigg| 
\lesssim (n_1^*)^{0+} \prod_{j = 1}^{2r} \|q^{(j)}_{n_{j}}\|_{L^2}.
\end{equation}

%
%
%
%

For the cubic case ($p = 1$), one can basically repeat the same computation
to establish \eqref{bd1}, thanks to the $L^4$-Strichartz estimate \cite{ZYG}:
\begin{equation} \label{L4}
\|e^{-i t \Dl} \phi\|_{L^4(\T^2)}
\lesssim  \|\phi\|_{L^2}.
\end{equation}

\noi
Unlike \eqref{L6}, there is no derivative loss in \eqref{L4}.
However, one has a small derivative loss in the inductive steps,
causing the $+$ sign in \eqref{bd1}.
See (A.22) in \cite{BO3}.
As a conclusion, the estimate \eqref{L666} holds when $p = 1, 2$.

\medskip

Theorem \ref{thm2} (b) follows once we show
\begin{equation} \label{D4}
\bigg| \frac{d}{dt} \HH(\D q) (t) \bigg| \lesssim N^{-2+}.
\end{equation}

\noi
In view of \eqref{A4}, \eqref{BB1}, and \eqref{C1} with $\dl = 0+$, 
we only need to improve  Case (b) in Estimate on \eqref{HH4}.
By applying \eqref{L666} and \eqref{SL2}
in \eqref{BB2} (in place of H\"older inequality and Sobolev embedding),
we have
\begin{align*}
|\eqref{HH4}| & \lesssim 
 N^\dl   \max_{ |a| \leq N^\dl } 
\sum_{\substack{n_1 - n_2+ \cdots - n_{2r} = 0\\ D(\bar{n}) = a}}
|c(\bar{n})|\cdot  m(n_1)q_{n_1}\cdot m(n_2)\bar{q}_{n_2} \cdot 
n_3^*\, n_4^*\, q_{n_3}\cdots\bar{q}_{n_{2r}} \notag \\
& \lesssim N^\dl\|\mathbb{P}_{\gtrsim N} \D q\|_{H^{0+}}^2 \big\||\dx| \, q\big\|_{L^2}^2 
\|q\|_{L^2}^{2r-4}  \notag \\
& \lesssim N^{-2+\dl +} \|\D q \|_{H^1}^{2r}.
\end{align*}

\noi
Hence, \eqref{D4} follows and this proves Theorem \ref{thm2} (b).

\section{Cubic Case: Theorem \ref{thm2} (c)}

In this section, we consider the cubic case ($p = 1$)
and prove Theorem \ref{thm2} (c).
First, we explicitly compute  first few terms appearing in the normal form reduction
in Subsection \ref{SEC:Z1}.
See also Erdo\u gan-Zharnitsky \cite{EZ}.
Then, we establish improved estimates on those terms
and prove Theorem \ref{thm2} (c)
in Subsection \ref{SEC:Z2}.

\subsection{Normal form reduction: cubic NLS}  \label{SEC:Z1}

Let $H$ be as in \eqref{Hamil2} with $p = 1$.
i.e. we have
\begin{align} \label{ZH1}
H(q) &  = \sum_n n^2 |q_n|^2
+ \sum_{n_1 - n_2 + n_3 - n_4 =0} q_{n_1} \bar{q}_{n_2}  q_{n_3}\bar{q}_{n_4}
 =: H_0(q) + H_1(q). \notag
\end{align}

\noi
Now, divide $H_1$ into the resonant part $\RR$ and the non-resonant part $\N$,
depending on $D_1(\bar{n}) = 0$ or $\ne 0$,
where $D_1(\bar{n})$ is defined by
\[ D_1(\bar{n}) := n_1^2 - n_2^2 + n_3^2 - n_4^2 = -2 (n_1 - n_2) (n_3 - n_2).\]

\noi
We further split $\RR$ into two parts:
\begin{align} \label{Z3}
\RR =  \sum_{\substack{n_1 - n_2 + n_3 - n_4 =0\\D_1(\bar{n})= 0}} 
q_{n_1} \bar{q}_{n_2}  q_{n_3}\bar{q}_{n_4}
=  2\sum_{n_1}\sum_{n_3}  |q_{n_1}|^2 |q_{n_3}|^2 
- \sum_n |q_{n}|^4
=: \RR_1+\RR_2.
\end{align}

\noi
By the conservation of the $L^2$-norm,
we have 

\[\RR_1 = 2\mu \sum_n |q_n|^2, \]

\noi
where $ \mu = (2\pi)^{-1} \int |u|^2 dx.$
By a direct computation, one easily sees that 
$\{\RR_1, F\} = 0$ for smooth $F$ of the form \eqref{Hamil4}.

As the first step of the normal form reduction, 
define $F_1$ such that $\{H_0, F_1\} = - \N$.
i.e.
\begin{align} \label{Z4}
F_1 = \sum_{\substack{n_1 - n_2 + n_3 - n_4 =0\\n_2 \ne n_1, n_3}} 
\frac{q_{n_1} \bar{q}_{n_2}  q_{n_3}\bar{q}_{n_4}}
{-2 (n_1 - n_2) (n_3 - n_2)}.
\end{align}

\noi
Let $\G_1$ be the Lie transform associated with $F_1$.
Then, we have
\begin{align} \label{Z1}
H' : = H \circ \G_1
= H_0 + \RR_1 + \RR_2
+ \{ \RR_2, F_1\} + \tfrac{1}{2}\{\N, F_1\}  + \text{h.o.t.}
\end{align}

\noi
Here, we used the fact that $\{\RR_1, F_1\} = 0$
and $\{\N, F\} +\frac{1}{2}\{\{H_0, F_1\}, F_1\}
= \tfrac{1}{2}\{\N, F\}.$
From \eqref{Z3} and \eqref{Z4}, we have
\begin{equation} \label{ZR1}
\{ \RR_2, F_1\}
= 2i (\mathcal{I}_0 - \cj{\mathcal{I}_0}),
\end{equation}

\noi
where $\mathcal{I}_0$ is given by
\begin{align} \label{I0}
\mathcal{I}_0 
& = \sum_{\substack{n_1 - n_2 + n_3 - n_4 = 0\\
n_2 \ne n_1, n_3}}
\frac{q_{n_1}\bar{q}_{n_2}q_{n_3} |q_{n_4}|^2\bar{q}_{n_4}}
{(n_1 - n_2)(n_3 - n_2)}  \notag\\
&  = \sum_{\substack{n_1 - n_2 + n_3 - n_4 + n_5 - n_6= 0\\
n_2 \ne n_1, n_3\\
n_4 = n_5 = n_6}}
\frac{q_{n_1}\bar{q}_{n_2}q_{n_3} \bar{q}_{n_4}q_{n_5}\bar{q}_{n_6}}
{(n_1 - n_2)(n_3 - n_2)}.
\end{align}

Next, we introduce two more transformations 
to eliminate the ``non-resonant'' parts of $\{ \RR_2, F_1\}$ and 
$\tfrac{1}{2}\{\N, F_1\}$.
First, we divide them  
into the resonant parts (with $(\text{r})$)
and the non-resonant parts (with $(\text{nr})$),
\begin{align*}
\{ \RR_2, F_1\} & = \{ \RR_2, F_1\}^{(\text{r})} + \{ \RR_2, F_1\}^{(\text{nr})}\\
\{\N, F_1\} & = \{\N, F_1\}^{(\text{r})} + \{\N, F_1\}^{(\text{nr})},
\end{align*}

\noi
depending on 
\begin{equation} \label{ZD2}
|D_2(\bar{n})| \leq N^{\beta} \quad \text{or} \quad |D_2(\bar{n})| > N^{\beta}
\end{equation}

\noi
for some $\beta > 0$ (to be chosen later), where $D_2(\bar{n})$ is defined by
\[ D_2(\bar{n}) := n_1^2 - n_2^2 + n_3^2 - n_4^2 + n_5^2 - n_6^2.\]

\noi
Now,  define $F_2$ and $F_3$ such that 
\begin{align*}
 \{H_0, F_2\} & = \tfrac{1}{2} \{\N, F_1\}^{(\text{nr})}\\
\{H_0, F_3\} &  =\{ \RR_2, F_1\}^{(\text{nr})}
\end{align*}

\noi
i.e. we have 
\begin{equation} \label{ZD3}
F_2\sim \text{``}D_2^{-1}\{\N, F_1\}^{(\text{nr})}\, \text{''}
\quad \text{and} \quad
F_3\sim \text{``}D_2^{-1}\{ \RR_2, F_1\}^{(\text{nr})}\, \text{''}.
\end{equation}

\noi
Let $\G_2$ and $\G_3$ be the Lie transforms associated with $F_2$ and $F_3$.
Then, from \eqref{Z1}, we have
\begin{align} \label{Z2}
H'' : = H \circ \G_1 \circ \G_2 \circ \G_3
= H_0 + \RR_1 + \RR_2
+ \{ \RR_2, F_1\}^{(\text{r})} + \tfrac{1}{2}\{\N, F_1\}^{(\text{r})}  + \text{h.o.t.}
\end{align}

\noi
From \eqref{ZR1} and \eqref{Z4}, we have
\begin{align}
\{ \RR_2, F_1\}^{(\text{r})}
& = 2i (\mathcal{I}_1 - \cj{\mathcal{I}_1}),\\
\tfrac{1}{2}\{\N, F_1\}^{(\text{r})}
& = 2i (\mathcal{I}_2 - \cj{\mathcal{I}_2}),
\end{align}

\noi
where  $\mathcal{I}_1$ is the resonant part (i.e. $|D_2(\bar{n})| \leq N^{\beta}$)
of $\mathcal{I}_0$ defined in \eqref{I0}
and 
$\mathcal{I}_2$ is given by
\begin{equation} \label{I2}
\mathcal{I}_2 = \sum_{\substack{n_1 - n_2 + n_3 - n_4 + n_5 - n_6= 0\\
n_2 \ne n_1, n_3\\n_5 \ne n_4, n_6\\ |D_2(\bar{n})| \leq N^{\beta}}}
\frac{q_{n_1}\bar{q}_{n_2}q_{n_3} \bar{q}_{n_4}q_{n_5}\bar{q}_{n_6}}
{(n_1 - n_2)(n_3 - n_2)}.
\end{equation}

In the next subsection, we estimate the terms
$\RR_1$, $\RR_2$, $\{ \RR_2, F_1\}^{(\text{r})}$, and $\tfrac{1}{2}\{\N, F_1\}^{(\text{r})}$
appearing in \eqref{Z2}.
Also,  note that all the higher order terms in \eqref{Z2}
are Poisson-bracketed with $F_2$ or $F_3$ at least once.
i.e. they have an extra decay of $|D_2|^{-1} < N^{-\beta}$
from \eqref{ZD2} and \eqref{ZD3}.

After this point, 
we perform the (usual) normal form reductions 
(as in Section \ref{SEC:NORMAL})
on the higher order terms in \eqref{Z2}.
In particular,  we use \eqref{N1} and \eqref{N2}
with $\dl = 0+$
to distinguish the resonant and non-resonant terms.
In the process, we construct Hamiltonians $F$ of the form \eqref{Hamil8}
to eliminate the non-resonant parts of the higher order terms in \eqref{Z2}.
For such $F$, it follows from the observation in the previous paragraph
that  $c(\bar{n})$ in \eqref{Hamil8} is small,
i.e. $|c(\bar{n})| < N^{-\beta}$.
After a finite number of iterations, 
\eqref{Z2} is reduced to 
\begin{align} \label{Z5}
\HH  = \wt{H}_0  + \RR_2
+ \{ \RR_2, F_1\}^{(\text{r})} + \tfrac{1}{2}\{\N, F_1\}^{(\text{r})}  + 
\underbrace{\N_0+ \N_r}_{= \text{ h.o.t.}}
=: \wt{H}_0 + \wt{\N},
\end{align}

\noi
where $\wt{H}_0$ is the new quadratic part defined by
\[ \wt{H}_0 : = H_0 + \RR_1 = \sum_n (n^2 + 2\mu) |q_n|^2\]

\noi 
and the higher order terms have an extra factor of $N^{-\beta}$.
(Compare this with \eqref{Hamil9}.)

\subsection{Improved estimates} \label{SEC:Z2}
In this subsection, we prove Theorem  \ref{thm2} (c)
by establishing improved estimates
for all the terms in \eqref{Z5}.
Differentiating \eqref{Z5} in time, we obtain 
\begin{align}
\frac{d}{dt} \HH(\D q) & = \frac{\dd \HH}{\dd q}(\D q)\cdot \D q_t 
+ \frac{\dd \HH}{\dd \bar{q}}(\D q)\cdot \cj{\D q}_t  \notag \\
& = i \sum_n m(n)^2 (n^2+ 2\mu) \bigg( \bar{q}_n \frac{\dd \wt{\N}}{\dd \bar{q}_n}(q)
 - q_n \frac{\dd \wt{\N}}{\dd q_n}(q) \bigg)  \label{ZHH1} \\
& + i \sum_n m(n) (n^2+ 2\mu) \bigg( q_n \frac{\dd \wt{\N}}{\dd q_n}(\D q)
- \bar{q}_n \frac{\dd \wt{\N}}{\dd \bar{q}_n}(\D q) \bigg)  \label{ZHH2}\\
& + i \sum_n m(n) \bigg(  \frac{\dd \wt{\N}}{\dd q_n}(\D q) \frac{\dd \wt{\N}}{\dd \bar{q}_n}( q)
- \frac{\dd \wt{\N}}{\dd q_n}( q) \frac{\dd \wt{\N}}{\dd \bar{q}_n}( \D q) \bigg).  \label{ZHH3}
\end{align}

\noi
In the following, we simply use $\jb{n}$ for  $\jb{n}_{\mu} := (n^2+ 2\mu)^\frac{1}{2}$
since $\mu$ is a fixed constant thanks to the $L^2$-conservation.

First, note that Theorem \ref{thm2} (c) follows
once we prove
\begin{equation} 
\bigg| \frac{d}{dt} \HH(\D q) (t) \bigg|
\leq |\eqref{ZHH1}|+ |\eqref{ZHH2}|+ |\eqref{ZHH3}| \lesssim N^{-\frac{9}{4}+}.\label{ZZ1}
\end{equation}

\noi
Also, note that the terms  \eqref{ZHH1}--\eqref{ZHH3} are basically the same as 
\eqref{HH1}--\eqref{HH3}.
Thus, by comparing \eqref{D4} and \eqref{ZZ1}, 
it suffices to show that there is an additional decay of $N^{-\frac{1}{4}+}$
in this case. 

As mentioned at the end of the last subsection, 
all the higher order terms in \eqref{Z5}
have an extra decay of $|D_2|^{-1} < N^{-\beta}$.
Hence, by repeating the argument in Subsection \ref{SEC:p23}
with this extra decay of $N^{-\beta}$,
we have
\begin{equation}
|\eqref{ZHH1}|+ |\eqref{ZHH2}|+ |\eqref{ZHH3}| \lesssim N^{-2-\beta+}.\label{WZ0}
\end{equation}

\noi
Moreover, if either of $\N$ or $\wt{\N}$ in \eqref{HHH3}, say $\N$, 
is one of the higher order terms, 
then, we also gain an extra decay of $N^{-\beta}$ from $\N$,
and thus \eqref{WZ0} holds.

\medskip
Therefore, we only consider 
the contributions from  $\RR_2$, $\mathcal{I}_1$ for  $\{ \RR_2, F_1\}^{(\text{r})}$, and 
$\mathcal{I}_2$ for $\tfrac{1}{2}\{\N, F_1\}^{(\text{r})}$ in the following.
Recall that the main idea in Subsections \ref{SEC:NONLIN1} and \ref{SEC:p23}
is to identify large frequencies and apply \eqref{SL2} 
to gain a negative power of $N$.
In particular, it follows from \eqref{L666} and \eqref{SL2} that 
for each large frequency $\, \gtrsim N$,
we basically gain a power of $N^{-1}$.

\medskip
First, let us use \eqref{L666} to establish preliminary estimates on 
$\RR_2$, $\mathcal{I}_1$, and  $\mathcal{I}_2$,
assuming \eqref{MAX}:
\[n^*_1 := \max (|n_1|, \dots, |n_{2r}|) > N.\]

\medskip
\noi
$\bullet$ {\bf (i) On $\RR_2$:} By writing $\RR_2$ in the form \eqref{Hamil6}, we have
\begin{align} 
\RR_2 (q)=  - \sum_n |q_{n}|^4
= -\sum_{\substack{n_1 - n_2 + n_3 - n_4 =0\\n_1 = n_2 = n_3 = n_4}} 
q_{n_1} \bar{q}_{n_2}  q_{n_3}\bar{q}_{n_4}.
\end{align}

\noi
By \eqref{MAX}, we have $|n_j| >N$, $j = 1, \cdots, 4.$
Then, from \eqref{L666} and \eqref{SL2}, 
we have
\begin{align} \label{W1}
|\RR_2(q)| \lesssim  \|\mathbb{P}_{\geq N} q\|_{H^{0+}}^4
\leq N^{-4+} \| q\|_{H^{1}}^4.
\end{align}

\medskip
\noi
$\bullet$ {\bf (ii) On $\mathcal{I}_1$:}
From \eqref{I0}, we have
\begin{align}
\mathcal{I}_1(q)
&  = \sum_{\substack{n_1 - n_2 + n_3 - n_4 + n_5 - n_6= 0\\
n_2 \ne n_1, n_3\\
n_4 = n_5 = n_6\\ |D_2(\bar{n})| \leq N^\beta }}
\frac{q_{n_1}\bar{q}_{n_2}q_{n_3} \bar{q}_{n_4}q_{n_5}\bar{q}_{n_6}}
{(n_1 - n_2)(n_3 - n_2)}.
\end{align}

\noi
If $|n_4| \gtrsim N$, then we have at least three large frequencies ( $\gtrsim N$.)
Thus, from \eqref{L666} and \eqref{SL2}, 
we have
\begin{align} \label{W2}
|\mathcal{I}_1(q)| 
\lesssim N^\beta \Big(\prod_{j = 1}^3 \|q_{n_j}\|_{l^2} \Big)
\|\mathbb{P}_{\geq N} q\|_{H^{0+}}^3
\leq N^{-3+\beta+} \| q\|_{H^{1}}^6.
\end{align}

\noi
If $|n_4| \ll N$, then 
there are at least two frequencies among $n_1, n_2, n_3$
of size $\gtrsim N$.
If $\min (|n_1|, |n_2|, |n_3|) \gtrsim N$,
then we have 
$|\mathcal{I}_1(q)| \leq N^{-3+\beta+} \| q_n\|_{H^{1}}^6$
as in \eqref{W2}.
If $\min (|n_1|, |n_2|, |n_3|) \ll N$,
then we have
\[|(n_1 - n_2)(n_3 - n_2)| \gtrsim N.\]

\noi
Hence, we have
\begin{align} \label{W3}
|\mathcal{I}_1(q)| 
\lesssim N^{-1+\beta} \|\mathbb{P}_{\gtrsim N} q\|_{H^{0+}}^2 \|q_{n}\|^4_{l^2} 
\leq N^{-3+\beta+} \| q\|_{H^{1}}^6.
\end{align}

\medskip
\noi
$\bullet$ {\bf (iii) On $\mathcal{I}_2$:}
We have $n_1^* \geq n_2^* \gtrsim N$.
If $n_3^* \gtrsim N $, then we have
\begin{align} \label{W4}
|\mathcal{I}_2(q)| 
\lesssim N^\beta \|\mathbb{P}_{\gtrsim N} q\|_{H^{0+}}^3
\|q_n\|_{l^2}^3
\leq N^{-3+\beta+} \| q\|_{H^{1}}^6.
\end{align}

\noi
Hence, suppose $n_3^* \ll N$ in the following.

\medskip
\noi
$\circ$ Case (iii.1): 
Suppose $\max (|n_1|, |n_2|, |n_3|) \gtrsim N$.
Then we have $|(n_1 - n_2)(n_3 - n_2)| \gtrsim N$.
Hence, $|\mathcal{I}_2(q)| 
\lesssim  N^{-3+\beta+} \| q\|_{H^{1}}^6$ as before.

\medskip
\noi
$\circ$ Case (iii.2): 
Suppose $\max (|n_1|, |n_2|, |n_3|) \ll N$.
Let $\beta < 1$.
Then, $|D_2(\bar{n})| \leq N^\beta$ implies 
 $|n_5| \geq n_2^*$.
Otherwise, i.e., if $|n_5| \leq n_3^* \ll N$,
then we would have $|n_4|, |n_6| \gtrsim N$,
and thus
\[ -n_4^2 - n_6^2 = D(\bar{n}) + o(N^2) = o(N^2).\]

\noi
This is clearly a contradiction.
Hence, we have $|n_5| \geq n_2^*$.
Without loss of generality assume $|n_4| \geq |n_6|$.
i.e. $\{|n_4|, |n_5|\} = \{n_1^*, n_2^*\}$.

$\diamond$ Subcase (iii.2.a):
Suppose $n_3^* \lesssim N^{\frac{1}{2}-}$.
Then, write $n_4$ as 
$n_4 = n_5 + m $,
where $m = O(N^{\frac{1}{2}-})$ and $m \ne 0$.  (Recall $n_4 \ne n_5$.) 
Then, we have
\begin{equation*}
|D_2(\bar{n})| = |n_4^2- n_5^2 + O(N^{1-})|
= |2m n_5 + O(N^{1-})| \gtrsim |m n_5| \gtrsim N.
\end{equation*}

\noi
This contradicts with $|D_2(\bar{n})| \leq N^\beta \ll N$.

$\diamond$ Subcase (iii.2.b):
Suppose $n_3^* \gg N^{\frac{1}{2}-}$.
Then, we have $|D_2(\bar{n})| \leq N^\beta \leq N^{1-} \ll (n_3^*)^2$.
This in turn implies
$n_4^* \sim n_3^*$ as in Case (b) of Estimate on \eqref{HH1}
in Subsection \ref{SEC:NONLIN1}.
Thus, we have
\begin{align} \label{W5}
|\mathcal{I}_2(q)| 
\lesssim N^\beta  \|\mathbb{P}_{\gtrsim N} q\|_{H^{0+}}^2
\|\mathbb{P}_{\gtrsim N^{\frac{1}{2}-}} q\|_{L^2}^2
\|q\|_{L^2}^2 
\leq N^{-3+\beta+} \| q\|_{H^{1}}^6.
\end{align}

\noi
Therefore, we have
$|\mathcal{I}_2(q)| 
\lesssim 
 N^{-3+\beta+} \| q\|_{H^{1}}^6$
as long as $\beta < 1$.

\medskip

In the following, we estimate the contributions
from $\RR_2$, $\mathcal{I}_1$, and $\mathcal{I}_2$
for \eqref{ZHH1}, \eqref{ZHH2}, and \eqref{ZHH3},
assuming $\beta < 1$.

\medskip
\noi
$\bullet$ {\bf Estimate on \eqref{ZHH2}:} 
Since $\sum_{j = 1}^{2r} (-1)^{j+1} (n_j^2+2\mu) = D(\bar{n})$,
we can rewrite \eqref{ZHH2} in the form \eqref{HH5}.
First, note that there is no contribution from $\RR_2$
since $D_1(\bar{n}) = 0$.
From \eqref{W1}--\eqref{W5}, we have
\begin{align*}
|\mathcal{I}_1|, \, |\mathcal{I}_2| \lesssim N^{\beta} N^{-3+\beta+}
=  N^{-3+2\beta+}.
\end{align*}

\noi
Therefore, we have
\begin{equation} 
|\eqref{ZHH2}|\lesssim N^{-3+2\beta+}. \label{WZ1} 
\end{equation}

\medskip
\noi
$\bullet$ {\bf Estimate on \eqref{ZHH1}:} 
First, we rewrite \eqref{ZHH1} as before.
\begin{align*} 
\eqref{ZHH1}
& = - \sum_{n_1 - n_2+ \cdots - n_{2r} = 0}
c(\bar{n}) \wt{R}(\bar{n})
q_{n_1}\bar{q}_{n_2}\cdots\bar{q}_{n_{2r}} 
\end{align*}

\noi
where $\wt{R}(\bar{n})$ is defined by 
\begin{equation} \label{R2}
\wt{R}(\bar{n}) =  m(n_1)^2\jb{n_1}^2 - m(n_2)^2\jb{n_2}^2 + \cdots - m(n_{2r})^2 \jb{n_{2r}}^2.
\end{equation}

\noi
Once again, there is no contribution from $\RR_2$
since $\wt{R}(\bar{n}) = 0$ when $n_1 = \cdots = n_4$.

In the following, we estimate 
the contribution from
$\mathcal{I}_1$ and $\mathcal{I}_2$
on \eqref{ZHH1}.
By repeating the computation in Estimate on \eqref{HH1}
in Subsection \ref{SEC:NONLIN1}, we have
\begin{equation*}
|\wt{R}(\bar{n})| \lesssim m(n_1^*)^2O((n_3^*)^2+N^\beta).
\end{equation*}

\medskip
\noi
$\circ$ Case (a): Suppose $n_3^* \lesssim N^\frac{\beta}{2}.$
In this case, we have 
$|\wt{R}(\bar{n})| \lesssim m(n_1^*) m(n_2^*) N^\beta$.
Hence, from \eqref{W1}--\eqref{W5},
the contribution from
$\mathcal{I}_1$ and $\mathcal{I}_2$
can be estimated as 
\begin{align} \label{ZW2}
|\eqref{ZHH1}| \lesssim  N^{-3+2\beta+}.
\end{align}

\medskip
\noi
$\circ$ Case (b): Suppose $n_3^* \gg N^\frac{\beta}{2}.$
In this case, we have $n_4^* \sim n_3^*$
as in Subsection \ref{SEC:NONLIN1}.
Hence, we have $|\wt{R}(\bar{n})| \lesssim m(n_1^*) m(n_2^*) n_3^*n_4^*$.

\medskip

First, we estimate the contribution from $\mathcal{I}_1$.

\medskip
$\diamond$ Subcase (b.1):
Suppose $ |n_4| \gtrsim N$.
This implies that 
$\max(n_1, n_2, n_3) \geq n_4^* \sim n_3^* \gtrsim N$.

If $n_5^* \ll N$, then we have 
$\med(n_1, n_2, n_3) = n_5^* \ll N$
and thus
$|(n_1 - n_2)(n_3 - n_2)| \gtrsim N$.
Hence, we have
\begin{align} \label{ZW3}
|\eqref{ZHH1}| 
& \lesssim N^{-1+\beta}
\|\mathbb{P}_{\gtrsim N} \D q\|_{H^{0+}}^2
\Big(\prod_{j = 3}^4 \|n_j^* q_{n_j^*}\|_{l^2}\Big)
\|q\|_{L^2}^2
\leq N^{-3+\beta+} \| \D q\|_{H^{1}}^6  \notag \\
& \lesssim N^{-3+\beta+}.
\end{align}

\noi
Otherwise, i.e. if $n_5^* \gtrsim N$, 
then  we have
\begin{align} \label{ZW4}
|\eqref{ZHH1}|  
& \lesssim N^{\beta}
\|\mathbb{P}_{\geq N} \D q\|_{H^{0+}}^2
\Big(\prod_{j = 3}^4 \|n_j^* q_{n_j^*}\|_{l^2}\Big)
\|\mathbb{P}_{\geq N}  q\|_{H^{0+}}
\|q\|_{L^2} 
 \lesssim N^{-3+\beta+}.
\end{align}

\medskip
$\diamond$ Subcase (b.2):
Suppose $|n_4|\ll N$.
This implies $n_3^* \sim n_4^* \ll N$.
Hence, we have $|(n_1 - n_2)(n_3 - n_2)| \gtrsim N$
and \eqref{ZW3} holds in this case.

\medskip

Next, we estimate the contribution from $\mathcal{I}_2$.

\medskip
$\diamond$ Subcase (b.3):
Suppose $n_3^* \gtrsim N$.
We have $n_4^* \gtrsim N$ since $n_4^* \sim n_3^*$.
Then, as in Subcase (b.1),  we obtain 
\eqref{ZW3} or \eqref{ZW4}, depending on the size of $n_5^*$.

\medskip
$\diamond$ Subcase (b.4):
Suppose $n_3^* \ll N$.
If $\max (n_1, n_2, n_3) \gtrsim N$,
then we have $|(n_1 - n_2)(n_3 - n_2)| \gtrsim N$.
Hence, $|\eqref{ZHH1}|  
\lesssim  N^{-3+\beta+} $ as in \eqref{ZW3}. 

Now, suppose 
$n_3^* \ll N$
 and $\max (n_1, n_2, n_3) \ll N$.
Then, 
as in  Subcase (iii.2.a) for the preliminary estimate on $\mathcal{I}_2$, 
the case $n_3^* \lesssim N^{\frac{1}{2}-}$ can not occur.
Hence, we have $n_3^* \gg N^{\frac{1}{2}-}$.

\medskip
$\diamond$ Subsubcase (b.4.i):
Suppose $n_5^* \ll n_3^*$.
It follows from $\max (n_1, n_2, n_3) \ll N \lesssim n_2^*$
that either
(a) two frequencies of $|n_1|, |n_2|, |n_3|$
are $O(n_3^*)$, and the other one is $o(n_3^*)$,
or  (b) one frequency of $|n_1|, |n_2|, |n_3|$
is $O(n_3^*)$, and the other two are $o(n_3^*)$.
In either case, we have
\[|(n_1 - n_2)(n_3 - n_2)| \gtrsim  O(n_3^*)
\gg N^{\frac{1}{2}-}.\]

\noi
Hence, we have
\begin{align} \label{ZW5}
|\eqref{ZHH1}|  
\lesssim N^{-\frac{1}{2}+\beta+}
\|\mathbb{P}_{\gtrsim N} \D q\|_{H^{0+}}^2
\Big(\prod_{j = 3}^4 \|n_j^* q_{n_j^*}\|_{l^2}\Big)
\|q\|_{L^2}^2
\lesssim N^{-\frac{5}{2}+\beta+}. 
\end{align}

\medskip
$\diamond$ Subsubcase (b.4.ii):
Suppose $n_5^* \sim n_3^*$.
In this case, we have
\begin{align} \label{ZW6}
|\eqref{ZHH1}|   
\lesssim N^{\beta}
\|\mathbb{P}_{\geq N} \D q\|_{H^{0+}}^2
\Big(\prod_{j = 3}^4 \|n_j^* q_{n_j^*}\|_{l^2}\Big)
\|\mathbb{P}_{\geq N^{\frac{1}{2}-}} q\|_{H^{0+}}
\|q\|_{L^2}
\lesssim N^{-\frac{5}{2}+\beta+}.
\end{align}

\medskip

\noi
From \eqref{ZW2}--\eqref{ZW6}, we conclude
\begin{equation} 
|\eqref{ZHH1}|\lesssim N^{-\frac{5}{2}+\beta+}. \label{WZ2} 
\end{equation}

\medskip
\noi
$\bullet$ {\bf Estimate on \eqref{ZHH3}:} 
We follow the argument in Estimate on \eqref{HH3}
in Subsection \ref{SEC:NONLIN1}.
It suffices to estimate
$\sum_n m(n)   \frac{\dd \N_1}{\dd q_n}(\D q) \frac{\dd \N_2}{\dd \bar{q}_n}( q)$.
where $\N_1$ and $\N_2$ are either $\RR_2$, $\mathcal{I}_1$, or $\mathcal{I}_2$
with frequencies $\{ n_j \}_{j = 1}^{2r}$ and $\{ \wt{n}_j \}_{j = 1}^{2\wt{r}}$.

\medskip
\noi
$\circ$ Case (a): Suppose $|n| \gtrsim N.$
In this case, we have $\wt{n}_1^* \gtrsim |n| \gtrsim N$.

If $\RR_2$ appears in one of the factors, say $\N_1 = \RR_2$, 
then, by duality with \eqref{L666}
(note $D_1(\bar{n}) = 0$), we have
\begin{align*} 
\bigg\| \frac{\dd \RR_2}{\dd q_n}(\D q) \bigg\|_{l^2_n}
& = \sup_{\|p\|_{L^2} = 1} 
\sum_{\substack{n - n_2 + n_3 - n_4= 0\\n_2 = n_3 = n_4 = n}} 
 p_n \cdot \cj{\D q}_{n_2}\D q_{n_3}  \cj{\D q}_{n_{4}}\\
& \lesssim  \|\mathbb{P}_{\geq N} q\|_{H^{0+}}^3
\lesssim N^{-3+}.
\end{align*}

\noi
By Cauchy-Schwarz inequality with \eqref{A3}, we have
\begin{align} \label{ZA1}
\bigg|\sum_n m(n)   \frac{\dd \RR_2}{\dd q_n}(\D q)  \frac{\dd \N_2}{\dd \bar{q}_n}( q)\bigg|
 & \leq \bigg\| \frac{\dd \RR_2}{\dd q_n}(\D q) \bigg\|_{l^2_n}
 \bigg\|m(n) \frac{\dd \N_2}{\dd \bar{q}_n}( q)\bigg\|_{l^2_n} \notag \\
&   \lesssim N^{-3+} N^{-1} \|\N_2\|
\lesssim N^{-4+}.
\end{align}

Hence, we assume that both $\N_1$ and $\N_2$ are 
either $\mathcal{I}_1$ or $\mathcal{I}_2$.
Then, by applying Cauchy-Schwarz inequality, duality, 
and the preliminary estimates on $\mathcal{I}_1$ or $\mathcal{I}_2$
in (ii)--(iii) on each factor,
we obtain
\begin{equation}
|\eqref{ZHH3}| \lesssim N^{-2+\beta+}N^{-2+\beta+}
= N^{-4+2\beta+}. \label{ZW7}
\end{equation}

\noi
Note that 
we gain only $N^{-2+\beta+}$ from each factor, instead of $N^{-3+\beta+}$ as in (ii)--(iii).
This is due to the fact that a duality variable $p$ is only in $L^2$
and thus we can not gain an extra power of $N$
through \eqref{SL2}.

\medskip
\noi
$\circ$ Case (b): Suppose $|n| \ll N.$
Then, we have either $n_1^*, n_2^* \gtrsim N$
or $\wt{n}_1^*, \wt{n}_2^* \gtrsim N$.\footnote{Recall 
a slight abuse of notation for $n_j^*$ and $\wt{n}_j^*$.
See Estimate on \eqref{HH3}
in Subsection \ref{SEC:NONLIN1}.}
Suppose $n_1^*, n_2^* \gtrsim N$.
Then, we can use the preliminary estimates on $\RR_2$, $\mathcal{I}_1$, or $\mathcal{I}_2$
in (i)--(iii)
for the first factor (after duality)
and use Lemma \ref{LEM:POIS2} (a) for the second factor:
\begin{equation} 
|\eqref{ZHH3}| \lesssim  
\bigg\| \frac{1}{\jb{n}}\frac{\dd \N_1}{\dd q_n}(\D q) \bigg\|_{l^2_n}
 \bigg\|\jb{n}m(n) \frac{\dd \N_2}{\dd \bar{q}_n}( q)\bigg\|_{l^2_n} 
 \lesssim 
N^{-3+\beta+}. \label{ZW8}
\end{equation}

\medskip
\noi
From \eqref{ZW7} and \eqref{ZW8}, we conclude
\begin{equation} 
|\eqref{ZHH3}| \lesssim \max(N^{-4+2\beta+}, N^{-3+\beta+}) = N^{-3+\beta+}. \label{WZ3}
\end{equation}

\noi
for $\beta < 1$.

\medskip
Putting together all the estimates
from \eqref{WZ0}, \eqref{WZ1},\eqref{WZ2}, and \eqref{WZ3}, 
we have
\begin{align*}
|\eqref{ZHH1}|+ |\eqref{ZHH2}|+ |\eqref{ZHH3}| 
\lesssim \max( N^{-2-\beta+},N^{-3+2\beta+}, N^{-\frac{5}{2}+\beta+})
\end{align*}

\noi
By choosing $\beta = \frac{1}{4}$, 
\eqref{ZZ1} follows.
This completes the proof of Theorem \ref{thm2} (c).

\appendix

\section{On Theorem \ref{thm1}}
In \cite{BO3}, Bourgain presented details for the quintic nonlinearity ($ p = 2$.)
After the normal form reduction, 
\eqref{L666} was enough to conclude the result.
For the cubic case ($p = 1$),
Theorem \ref{thm1} (a) follows once we note that 
\eqref{L666} still holds in this case, as discussed in subsection \ref{SEC:p23}.

For $p \geq3$, there is no Strichartz estimate available in the periodic setting, 
and thus we need to rely on Sobolev inequality.
However, we can still perform the normal form reduction as in Section \ref{SEC:NORMAL}
(with $K = T^\dl$) since 
both \eqref{L2} and \eqref{H1} are  satisfied for all $t \in \R$
thanks to the $L^2$-conservation
and the conservation of the defocusing Hamiltonian.
Hence, we can proceed as in \cite{BO3}.

Let 
$I_s(q) = \|q\|_{H^s}^2 = \sum_n |n|^{2s} |q_n|^2$.
Then, after the normal form reduction, we have (see (A.29) in \cite{BO3})
\begin{equation} \label{E1}
\dt I_s \lesssim 
\bigg|\sum_{\substack{n_1 - n_2 + \cdots - n_{2r}=0\\ |D(\bar{n})|< K}} 
c(\bar{n}) D_s(\bar{n})
q_{n_1} \bar{q}_{n_2} \cdots q_{n_{2r-1}}\bar{q}_{n_{2r}}\bigg|
\end{equation}

\noi
where
$D_s(\bar{n}) :=  \sum_j  (-1)^j |n_j|^{2s}.$
\noi
By Lemma on p.1355 in \cite{BO3}, we have
\[|D_s(\bar{n})| \lesssim (n_1^*)^{2(s-1)} (n_3^* n_4^* + K).\]

\noi
(Note a typo in the statement (A.32) in \cite{BO3}.)
Assume that $n_j^* = |n_j|$, $j = 1, \dots, 4$.
Moreover, assume $K \leq |n_3| |n_4|$.
Then, we have 
\begin{align} 
|\eqref{E1}| \lesssim 
\sum_{\substack{n_1 - n_2 + \cdots - n_{2r}=0\\ |D(\bar{n})|< K}} 
|c(\bar{n})| 
\bigg(\prod_{j = 1}^2  |n_j|^{(s-1)}  |q_{n_j}|\bigg)
\bigg(\prod_{j = 3}^4 |n_j| |q_{n_j}|\bigg) 
\bigg(\prod_{j = 5}^{2r} |q_{n_j}|\bigg). \label{E2}
\end{align}

\noi
For the quintic case in \cite{BO3}, 
it is at this point (see (A.37)--(A.38) in \cite{BO3})
that the space-time estimate \cite[(A.18)]{BO3} was used.
As mentioned above, (A.18) in \cite{BO3} follows from from the $L^6$-Strichartz estimate \eqref{L6}.
For $p \geq 3$, we do not have such an estimate.
Thus, we simply proceed by H\"older inequality and Sobolev embedding on the physical side, and obtain
\begin{align} 
|\eqref{E2}| \lesssim \|q\|_{H^{s- \frac{1}{2}+}}^2 \|q\|_{H^1}^2 \|q\|_{H^{\frac{1}{2}+}}^{2r-4}
\lesssim I_s^{1-\theta}\|q\|_{H^1}^{2r-2 + 2\theta}, \label{E3}
\end{align}

\noi
where in the last step we used interpolation:
$ \|q\|_{H^{s- \frac{1}{2}+}}
\leq \|q\|_{H^s}^{1-\theta} \|q\|_{H^1}^\theta$
with 
\begin{equation} \label{E4}
\theta = \frac{1}{2(s-1)+}.
\end{equation}

\noi
If $ |n_3| |n_4| \leq K = T^\dl$, then
we obtain \eqref{E3} with an extra factor of $K = T^\dl$.
In view of the uniform bound on the $H^1$-norm on solutions, we obtain
\[ \dt I_s \lesssim T^\dl I_s^{1-\theta} \quad \LRA
\quad \dt (I_s^\theta) \lesssim T^{\dl}.\]

\noi
Hence, we obtain $I_s(t) \lesssim T^{\frac{1+\dl}{\theta}}  = T^{2(s-1)+}$ for $|t| \leq T$ (with $\dl = 0+$.)
This proves Theorem \ref{thm1} (b).

\medskip
\noi
{\bf Acknowledgment:}
J.C. and T.O. would like to thank Alessandro Selvitella 
for a lecture on the classical theorem of the Birkhoff normal form.

\end{document}